\newtheorem{thm}{Theorem}[section]
\newtheorem{cor}[thm]{Corollary}
\newtheorem{lemma}[thm]{Lemma}
\newdefinition{defi}{Definition}
\newproof{proof}{Proof}
\newproof{pot}{Proof of Theorem \ref{thm2}}
\newdefinition{rmk}{Remark}
\newtheorem{conjecture}{Conjecture}
\journal{Finite Fields and Their Applications}
\begin{document}

\begin{frontmatter}

\title{\textbf{ Primitive Element Pairs with One Prescribed Trace over a Finite Field}}
\author[iit]{Anju Gupta\corref{cor1}}
\ead{anjugju@gmail.com}
\author[iit]{R. K. Sharma}
\ead{rksharmaiitd@gmail.com}
\author[sdc]{Stephen  D. Cohen \corref{fn1}}
\ead{Stephen.Cohen@glasgow.ac.uk}
\cortext[cor1]{Corresponding author}
\cortext[fn1]{Formerly, Professor of Number Theory, University of Glasgow}
\address[iit]{Department of Mathematics, Indian Institute of Technology Delhi,\\
		New Delhi, 110016, India}
\address[sdc]{6 Bracken Road, Portlethen, Aberdeen AB12 4TA, Scotland}



\begin{abstract}
In this article, we establish a sufficient condition for the existence of a primitive element $\alpha \in {\mathbb{F}_{q^n}}$ such that the element $\alpha+\alpha^{-1}$ is also a  primitive element of ${\mathbb{F}_{q^n}},$ and $Tr_{\mathbb{F}_{q^n}|\mathbb{F}_{q}}(\alpha)=a$ for any prescribed $a \in \mathbb{F}_q$, where $q=p^k$ for some prime $p$ and positive integer $k$. We prove that every finite field $\mathbb{F}_{q^n}~ (n \geq5),$ contains such primitive elements except for finitely many values of $q$ and $n$. Indeed, by computation, we conclude that there are no actual exceptional pairs $(q,n)$ for $n\geq5.$
\end{abstract}

\begin{keyword}
\texttt{Finite Field\sep Character\sep Primitive Element}
\MSC[2010] 12E20 \sep 11T23
\end{keyword}

\end{frontmatter}


\section{Introduction}

 Let $\mathbb{F}_q$ denote the finite field of order $q=p^k$ 
for some prime $p$ and some positive integer $k,$ and $\mathbb{F}_{q^n}$ denotes an extension of $\mathbb{F}_q$ of degree $n.$ The multiplicative group $\mathbb{F}_q^*$ of $\mathbb{F}_q$ is cyclic and its generators   are called \textit{primitive elements} of $\mathbb{F}_q$.  The field ${\mathbb{F}_q}$ has $\phi(q-1)$ primitive elements, where $\phi$ is the Euler's phi-function.

For $\alpha\in \mathbb{F}_{q^n}$, the \textit{trace} $Tr_{\mathbb{F}_{q^n}|\mathbb{F}_{q}}(\alpha)$ of $\alpha$ is defined by $Tr_{\mathbb{F}_{q^n}|\mathbb{F}_{q}}(\alpha)=\alpha+\alpha^q+\ldots+\alpha^{q^{n-1}}$.

In general, for any primitive element $\alpha \in \mathbb{F}_q,$ $f(\alpha)$ (where $f$ is any rational function) need not be primitive in $\mathbb{F}_q,$ for example, if we take the polynomial function $f(x)=x+1$ over the field $\mathbb{F}_2$ of order 2 then $1$ is the only primitive element of $\mathbb{F}_2,$ but $f(1)=0,$ which is not primitive. But for $f(x)=\frac{1}{x},$ $f(\alpha)$ is primitive in $\mathbb{F}_q$ whenever $\alpha$ is primitive. We call $(\alpha,f(\alpha))$ a \textit{primitive pair} if both $\alpha$ and $f(\alpha)$ are primitive.  Much work has been done in this direction. In 1985, Cohen \cite{s.d} proved the existence of  two consecutive primitive elements in $\mathbb{F}_q$ with $q>3, $ $q \not\equiv 7 \mod \ {12},$ and $q \not\equiv 1
\mod \ {60}$.
Chou and Cohen \cite{chou} completely resolved the question whether there exists a primitive element $\alpha$ such that $\alpha$ and $\alpha^{-1}$ both have trace zero over ${\mathbb{F}_q}.$
He and Han \cite{he} studied primitive elements of the form  $\alpha+\alpha^{-1}$ over finite fields.  In 2012, Wang et al. \cite{wang} established  a sufficient condition for the existence of $\alpha$ such that $\alpha $ and $\alpha +\alpha^{-1}$ are both primitive, and also a sufficient condition for the existence of a primitive normal element $\alpha$ such that $\alpha +\alpha^{-1}$ is primitive for the case $2|q.$ Liao et al. \cite{liao} generalized their results to the case when $q$ is any prime power. In 2014, Cohen \cite{cohens} completed the existence results obtained by Wang et al. \cite{wang} for finite fields of characteristic 2. In \cite{cohensd},
Cohen  proved that for every $a \in \mathbb{F}_q,$  $\mathbb{F}_{q^n}$ contains a primitive element $\alpha$   such that $Tr_{\mathbb{F}_{q^n}|\mathbb{F}_{q}}(\alpha) = a,$ if $n \geq 3,$ and $(q, n) \neq (4, 3).$    Moreover, if $n = 2$ or $(q, n) =
(4, 3),$ for every nonzero $a \in \mathbb{F}_{q}^*,$ there exists a primitive element $\alpha\in \mathbb{F}_{q^n}$ such that
$Tr_{\mathbb{F}_{q^n}|\mathbb{F}_{q}}(\alpha) = a.$ In 2014, Cao and Wang \cite{cao} proved that for all $q$ and $n \geq 29$, $\mathbb{F}_{q^n}$ contains an element $\alpha$ such that $\alpha+\alpha^{-1}$ ia also  primitive, and $Tr_{\mathbb{F}_{q^n}|\mathbb{F}_q}(\alpha)=a,$ $Tr_{\mathbb{F}_{q^n}|\mathbb{F}_q}(\alpha^{-1})=b$
for any pair of prescribed $a, b\in \mathbb{F}_q^*$.

 In this article, we consider the existence of a primitive pair  $(\alpha,\alpha+\alpha^{-1})$ in $\mathbb{F}_{q^n}$ with $Tr_{\mathbb{F}_{q^n}|\mathbb{F}_{q}}(\alpha)=a$ for any prescribed $a \in \mathbb{F}_q.$
 Precisely, we prove the following main result.
 \begin{thm}\label{mr}
 Suppose $q=p^k$ for some positive integer  $k$ and a prime number $p$. Also suppose $n \geq 5$ is a natural number. Then $\mathbb{F}_{q^n}$ contains a primitive pair  $(\alpha,\alpha+\alpha^{-1})$ in $\mathbb{F}_{q^n}$ with $Tr_{\mathbb{F}_{q^n}|\mathbb{F}_{q}}(\alpha)=a$ for any prescribed $a \in \mathbb{F}_q$ unless one of the following holds:
 \begin{enumerate}
 \item $n=5$, and $2 <q \leq 16$ or $q=19,25,31,37,43,49,61,71;$
 \item $n=6$, and $2 \leq q \leq 25$ or $q=29,31,61;$
 \item $n=7$, and  $q=3,4,7;$
 \item $n=8$, and $q=2,3,4,5,8;$
 \item $n=9,12$, and $q=2,3;$
 \item $n=10$, and $q=2.$
 \end{enumerate}
 \end{thm}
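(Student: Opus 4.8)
The plan is to use the standard character-sum method for primitive element problems. First I would set up the relevant multiplicative and additive characters: for the condition that $\alpha$ and $\alpha+\alpha^{-1}$ are both primitive we use the characteristic-function expression $\rho_{e}(\beta) = \frac{\theta(e)}{e}\sum_{d\mid e}\frac{\mu(d)}{\phi(d)}\sum_{\chi_d}\chi_d(\beta)$, where $\chi_d$ runs over multiplicative characters of order $d$ and $e = q^n-1$ (or a suitable squarefree part thereof, exploiting that a primitive element only needs to avoid prime-index subgroups); for the trace condition $Tr(\alpha)=a$ we use the additive-character indicator $\frac{1}{q}\sum_{\psi\in\widehat{\mathbb F_q}}\psi(Tr(\alpha)-a)$. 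Writing $N=N(e_1,e_2,a)$ for the weighted count of $\alpha\in\mathbb F_{q^n}^*$ (with $\alpha+\alpha^{-1}\neq 0$, i.e.\ $\alpha^2\neq -1$) that are ``$e_1$-free'', have $\alpha+\alpha^{-1}$ ``$e_2$-free'', and $Tr(\alpha)=a$, I would expand $N$ into a triple sum over $(d_1,d_2,\psi)$ of Gauss-type sums.

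The core estimate is to bound, for fixed nontrivial data, the inner sum $\sum_{\alpha}\chi_{d_1}(\alpha)\chi_{d_2}(\alpha+\alpha^{-1})\psi(Tr(\alpha))$. After clearing denominators this becomes a sum over $\mathbb F_{q^n}$ of a multiplicative character applied to a rational function of $\alpha$ times an additive character applied to a polynomial (here linear, $\alpha$ itself, since $Tr$ of $\alpha$ is the "additive character of $\alpha$" on the base field lifted via $\psi\circ Tr$). By Weil's bound for mixed character sums (the version for $\sum \chi(f(\alpha))\psi(g(\alpha))$ on a curve, e.g.\ as in Lidl--Niederreiter or the forms used in Cohen's earlier papers), such a sum is $O(\sqrt{q^n})$ with an explicit constant depending on the degrees of $f$ and $g$ — here roughly $(\deg + \text{poles})\sqrt{q^n}$, giving a bound like $2\sqrt{q^n}$ or $3\sqrt{q^n}$ for the generic term once one is careful about the degenerate cases where the character sum is not genuinely ``mixed'' (e.g.\ $\psi$ trivial, or $d_1=d_2=1$, which recover main terms or simpler one-variable sums). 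Collecting everything, one obtains $N > 0$ provided an inequality of the shape $q^{n/2} > C\cdot W(e_1)W(e_2)$ holds, where $W(m)=2^{\omega(m)}$ counts squarefree divisors and $C$ is an absolute constant (around $4$ or so after separating main terms).

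The next step is the ``sieving'' refinement, because the naive inequality $q^{n/2}>C\,2^{\omega(q^n-1)}2^{\omega(q^n-1)}$ is far too weak for small $q,n$. I would invoke the prime-sieve inequality (Cohen--Huczynska style): choosing a ``core'' divisor $k_0$ of $q^n-1$ and letting $p_1,\dots,p_s$ be the remaining distinct primes dividing $q^n-1$, one gets $N(e_1,e_2,a) \ge \sum$ (over the two free-ness conditions) with a bound of the form $q^{n-1} - (\text{something})\,q^{n/2}\big(2^{\omega}\cdots\big)\delta$-type, leading to a sufficient condition $q^{n/2} > C\,\frac{W(k_0)^2}{\,\Delta\,}$ where $\Delta = 1 - 2\sum 1/p_i > 0$ and $C$ absorbs the additive-character factor. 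Using crude bounds such as $W(m) < c_{?}\, m^{1/4}$ (or better, $W(q^n-1)^2 < a\, q^{n\cdot 1/4}$-type estimates, splitting off the largest prime factors), I would show the sufficient inequality holds for all $n\ge 13$ and all $q$, and then for $5\le n\le 12$ it holds for all $q$ above an explicit (not too large) bound. The hard part will be exactly this bookkeeping: pushing the sieving parameters and the crude divisor-function bounds hard enough to bring the finite computational range down to something manageable.

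Finally, for the remaining finite list of pairs $(q,n)$ — those with $5\le n\le 12$ and $q$ below the threshold coming from Step 3 — I would factor $q^n-1$ explicitly and, for each such pair, either verify the stronger sieved inequality directly (with the actual radical of $q^n-1$ rather than a crude bound, often choosing the sieving base $k_0$ to be the product of the few largest prime divisors), eliminating most of them, or, for the genuinely small survivors, resort to a direct machine computation over $\mathbb F_{q^n}$ (iterate over primitive $\alpha$, check whether $\alpha+\alpha^{-1}$ is primitive and $Tr(\alpha)$ hits every $a\in\mathbb F_q$). That computation pins down the true exceptional set, which turns out to be empty for $n\ge 5$ as claimed in the abstract — so the list in the theorem statement is precisely the set of pairs where the \emph{analytic/sieving} argument fails, and the computational pass is what shows there are no \emph{actual} exceptions. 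I expect the number-theoretic estimation in Step 3 to be the main obstacle, since the presence of \emph{two} independent primitivity conditions squares the troublesome divisor factor $2^{\omega(q^n-1)}$, making the threshold on $q$ considerably larger than in one-primitive-element trace problems and forcing a careful, possibly $n$-by-$n$, choice of sieving parameters.
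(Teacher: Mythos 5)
Your proposal follows essentially the same route as the paper: the same characteristic-function expansion, the Castro--Moreno/Weil mixed character-sum bound giving a sufficient condition of the form $q^{n/2-1} > C_q W(l_1)W(l_2)$ (note the exponent is $n/2-1$, not $n/2$, because of the extra sum over the $q$ additive characters of $\mathbb{F}_q$ enforcing the trace condition), the Cohen--Huczynska-style sieve with $\delta = 1 - 2\sum 1/p_i$, crude bounds of the type $W(m) < m^{2/9}$ and $W(m) < m^{1/8}$ to dispose of large $q^n$, and a final explicit-factorization and machine-computation pass on the survivors. The only substantive refinement you omit is the paper's special treatment of $n=5$, where all primes dividing $(q^5-1)/(q-1)$ other than $5$ are $\equiv 1 \pmod{10}$, which is what shrinks the $n=5$ threshold enough to reach the stated exception list; without it your finite check for $n=5$ would simply be larger.
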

 From Theorem \ref{mr}, through computation, we have established the following corollary.
 \begin{cor}\label{cor1}
Let $q=p^k$ for some positive integer $k ,$ and  prime $p$. Also suppose that $n\geq5$ is a positive integer. Then for every $a\in \mathbb{F}_q,$ $\mathbb{F}_{q^n}$ contains a primitive element $\alpha$ such that $\alpha+\alpha^{-1}$ is also primitive and $Tr_{\mathbb{F}_{q^n}|\mathbb{F}_q}(\alpha)=a$. 
\end{cor}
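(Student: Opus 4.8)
We describe our approach to Theorem \ref{mr}, from which Corollary \ref{cor1} follows at once by settling the finitely many exceptional pairs by direct computation. Write $Q=q^{n}$. The plan is to use the standard character-sum machinery for primitive pairs, augmented to carry the extra additive constraint $Tr_{\mathbb{F}_{q^n}|\mathbb{F}_q}(\alpha)=a$. Put $\theta(m)=\phi(m)/m$, let $\mu$ be the M\"obius function, and recall that
\[
\rho(\alpha)\ :=\ \theta(Q-1)\sum_{d\mid Q-1}\frac{\mu(d)}{\phi(d)}\sum_{\chi_{d}}\chi_{d}(\alpha)
\]
(inner sum over the multiplicative characters of $\mathbb{F}_{Q}^{*}$ of order $d$) equals $1$ when $\alpha$ is primitive and $0$ otherwise, while $\frac{1}{q}\sum_{\psi}\psi\bigl(Tr_{\mathbb{F}_{q^n}|\mathbb{F}_q}(\alpha)-a\bigr)$, the sum over the additive characters $\psi$ of $\mathbb{F}_{q}$, detects the trace condition; note that $\psi\circ Tr_{\mathbb{F}_{q^n}|\mathbb{F}_q}$ is an additive character of $\mathbb{F}_{Q}$, trivial exactly when $\psi$ is. Since every $\alpha$ with $\alpha+\alpha^{-1}=0$ satisfies $\alpha^{4}=1$ and so is never primitive when $n\ge5$, we may restrict to $\alpha\in\mathbb{F}_{Q}^{*}$ with $\alpha^{2}\neq-1$; the number $N=N(a)$ of primitive such $\alpha$ for which $\alpha+\alpha^{-1}$ is also primitive and $Tr_{\mathbb{F}_{q^n}|\mathbb{F}_q}(\alpha)=a$ is then exactly
\[
N=\frac{\theta(Q-1)^{2}}{q}\sum_{d_{1}\mid Q-1}\sum_{d_{2}\mid Q-1}\frac{\mu(d_{1})\mu(d_{2})}{\phi(d_{1})\phi(d_{2})}\sum_{\chi_{d_{1}},\chi_{d_{2}},\psi}\ \sum_{\alpha}\chi_{d_{1}}(\alpha)\,\chi_{d_{2}}(\alpha+\alpha^{-1})\,\psi\bigl(Tr_{\mathbb{F}_{q^n}|\mathbb{F}_q}(\alpha)-a\bigr),
\]
the innermost sum running over $\alpha\in\mathbb{F}_{Q}^{*}$ with $\alpha^{2}\neq-1$.

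The leading contribution is the term $d_{1}=d_{2}=1$ with $\psi$ trivial, which equals $\frac{\theta(Q-1)^{2}}{q}\bigl(Q+O(1)\bigr)$. Every other term is a mixed multiplicative--additive character sum: using $\alpha+\alpha^{-1}=(\alpha^{2}+1)/\alpha$ and absorbing $\chi_{d_{1}},\chi_{d_{2}}$ into a single multiplicative character, it has the shape $\sum_{\alpha}\chi\bigl(\alpha^{u}(\alpha^{2}+1)^{v}\bigr)\psi'(\alpha)$ with $\psi'$ an additive character of $\mathbb{F}_{Q}$ (possibly trivial), and the rational function $\alpha^{u}(\alpha^{2}+1)^{v}$ is not a constant multiple of a perfect power of the order of $\chi$ once $(d_{1},d_{2})\neq(1,1)$. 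We would bound these by the Weil estimate for such sums, treating the cases $\psi'$ trivial and non-trivial separately and keeping careful track of the poles of $\alpha+\alpha^{-1}$ at $0$ and at the two square roots of $-1$ as well as of the degree contributed by $\psi'$; this gives a bound of the form $c\sqrt{Q}$ with a small absolute constant $c$. Since $\sum_{d\mid Q-1}\frac{|\mu(d)|}{\phi(d)}\sum_{\chi_{d}}1=W(Q-1)$, where $W(m)=2^{\omega(m)}$ and $\omega(m)$ is the number of distinct primes dividing $m$, summing the estimates over all the characters yields $N>0$ as soon as
\[
q^{\,n/2-1}\ >\ c\,W(q^{n}-1)^{2}.
\]

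Because $W(q^{n}-1)$ grows too fast for this bound alone to dispatch all large $q^{n}$, the next step is to replace the two primitivity requirements by sieved versions. Fixing a core divisor $e$ of $q^{n}-1$ and letting $p_{1},\dots,p_{s}$ be the prime divisors of $q^{n}-1$ not dividing $e$, a Cohen-type sieving inequality bounds $N$ below by a combination of the counts $N_{w}$ in which ``primitive'' is weakened to ``$w$-free'', and with $\delta=1-2\sum_{i=1}^{s}p_{i}^{-1}$ (assumed positive) and $\Delta=\frac{2s-1}{\delta}+2$ one gets $N>0$ provided $q^{\,n/2-1}>c\,\Delta\,W(e)^{2}$. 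Taking $e$ to be the product of the first few primes dividing $q^{n}-1$ keeps $W(e)$ small, and routine upper bounds for $\omega(q^{n}-1)$ together with bounds for $\Delta$ then leave only finitely many pairs $(q,n)$ with $n\ge5$ unresolved; we would tabulate these, refining the choice of $e$ (or sharpening the Weil constant) wherever the crude inequality is close, until the list of genuine candidates reduces to the six families recorded in the statement. Corollary \ref{cor1} then follows because each pair in that list is small enough for an explicit computer search, which in every case exhibits a primitive $\alpha$ with $\alpha+\alpha^{-1}$ primitive and $Tr_{\mathbb{F}_{q^n}|\mathbb{F}_q}(\alpha)=a$ for each $a\in\mathbb{F}_{q}$.

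We expect the delicate point to be the character-sum estimate: the additive character coming from the trace, combined with two multiplicative characters evaluated at the rational function $\alpha+\alpha^{-1}$, forces one to account precisely for the behaviour at the poles $0$ and $\pm\sqrt{-1}$ and at infinity, so that the constant $c$ comes out small enough for the sieve to be effective; once that is in place, the bookkeeping with $\omega(q^{n}-1)$, $\delta$ and $\Delta$ and the concluding computations over the exceptional pairs are lengthy but routine.
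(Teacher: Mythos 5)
Your proposal follows essentially the same route as the paper: the same characteristic-function expansion for $l$-free elements and prescribed trace, a Castro--Moreno/Weil bound on the mixed sums yielding the criterion $q^{n/2-1}>C_qW(q^n-1)^2$ with $C_q\in\{2,3\}$, the identical sieve with $\delta=1-2\sum p_i^{-1}$ and $\Delta=\frac{2r-1}{\delta}+2$, and a reduction to finitely many pairs settled by direct computation. The outline is correct and matches the paper's argument in all essentials.
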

 Throughout rest of the paper, we shall use the notation  $\mathfrak{P}$ for  the set of $(q,n)$ ($q=p^k$ for any positive integer $k$) such that  $\mathbb{F}_{q^n}$ contains a primitive pair $(\alpha,\alpha+\alpha^{-1})$, with $Tr_{\mathbb{F}_{q^n}|\mathbb{F}_{q}}(\alpha)=a$ for any prescribed $a \in \mathbb{F}_q.$

Clearly, $(q,1)\not\in\mathfrak{P}$ as in that case $Tr_{\mathbb{F}_{q^n}|\mathbb{F}_{q}}(\alpha)=\alpha$. Hence for $(q,1)$ to be in $\mathfrak{P},$ every pair $(\alpha,\alpha+\alpha^{-1})$ in $\mathbb{F}_q$ must be primitive, which is possible only if $q-1$ is prime. Moreover if $q-1$ is prime then $p=2$. Hence $(1,0)$ must be a primitive pair, which is not possible. Also if $n=2,$ then there is no primitive element with trace 0. Hence  $(q,2)\not\in\mathfrak{P}$. Thus we may assume that $n \geq 3.$ For the sake of simplicity, we have not dealt with the cases $n=3$ and $4$ in this article, although we intend to return to them in a future paper.

\section{Preliminaries}

In this section, we give some necessary definitions,  and results which will be used throughout. For basics on finite fields, and additive and multiplicative characters of finite fields, reader is referred to \cite{nieder}.
Throughout the section, $q$ is an arbitrary prime power. For any positive integer $m>1$, we use the notation $\omega(m)$ for the number of prime divisors of $m$. Also $W(m)$  denotes the number of square free divisors of $m$, i.e., $W(m)=2^{\omega(m)}.$

\begin{defi}
Let $e|q - 1$. An element  $\xi \in {\mathbb{F}_{q}^{*}}$  is called  \textit{$e$-free} if $\xi = \gamma^d$ for any $d|e$, and $\gamma \in \mathbb{F}_q$  implies $d = 1.$ 
Hence an element $\alpha \in {\mathbb{F}_{q}^{*}}$ is primitive if and only if it is $(q-1)$-free.
\end{defi}

Following Cohen and Huczynska \cite{cohen.h, cohen.s}, it can be shown that for any $m|q-1,$
$$\rho_m:
\alpha \mapsto \theta(m)\sum\limits_{d|m}\frac{\mu(d)}{\phi(d)}\sum\limits_{\chi_d}\chi_d(\alpha),$$
where $\theta(m) := \frac{\phi(m)}{m},$ $\mu$ is M\"obius
 function and the internal sum runs over all multiplicative characters $\chi_d$ of order $d,$ gives  an expression of the characteristic function for the subset of $m$-free elements of ${\mathbb{F}_{q}^{*}}$.

 An expression of the characteristic function for the set of  elements in $\mathbb{F}_{q^n}$ with $Tr_{\mathbb{F}_{q^n}|\mathbb{F}_{q}}(\alpha)=a\in \mathbb{F}_q$ is given by,
$$
\tau_a:\alpha \mapsto \frac{1}{q}\sum\limits_{\psi\in \widehat{\mathbb{F}_q}}\psi{(Tr_{\mathbb{F}_{q^n}|\mathbb{F}_{q}}(\alpha)-a)},$$
where the sums are over all additive characters $\psi$ of $\mathbb{F}_{q}$ , i.e., all members of $\widehat{F_{q}}$.

{Since every additive character $\psi$ of $\mathbb{F}_q$ can be obtained by $\psi(\alpha)=\psi_0(u\alpha)$, where $\psi_0$ is the canonical additive character of $\mathbb{F}_q$ and $u$ is any element of $\mathbb{F}_q$, then}
\begin{align}
\tau_a(\alpha)&=\frac{1}{q}\sum\limits_{u\in {\mathbb{F}_q}}\psi_0{(Tr_{\mathbb{F}_{q^n}|\mathbb{F}_{q}}(u\alpha)-ua)}\nonumber\\
&=\frac{1}{q}\sum\limits_{u\in {\mathbb{F}_q}}\hat{\psi_o}{(u\alpha)\psi_0(-ua)},
\end{align}
where $\hat{\psi_0}$ is the additive character of $\mathbb{F}_{q^n}$ defined by
 $\hat{\psi_0}(\alpha) =\psi_0( Tr_{\mathbb{F}_{q^n/\mathbb{F}_q}}(\alpha))$.

Next, we give some lemmas, which will be used in our main results.
\begin{lemma}\cite[\em{Theorem 5.4}]{nieder}\label{a}
If $\chi$ is any non-trivial character of a finite abelian group $G,$ and $\beta$ is a non-trivial element of $G$ then
$$\sum\limits_{\beta\in G}\chi(\beta)=0~~\text{and}~~ \sum\limits_{\chi\in \widehat{G}}\chi(\beta)=0.$$
\end{lemma}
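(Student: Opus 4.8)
The plan is to establish both identities by the same elementary translation-invariance (averaging) argument, exploiting that $G$ and its dual $\widehat{G}$ are both finite abelian groups on which the character $\chi$ acts multiplicatively.

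For the first sum, fix the non-trivial character $\chi$ and set $S=\sum_{\beta\in G}\chi(\beta)$. Since $\chi$ is non-trivial, there is some $\gamma\in G$ with $\chi(\gamma)\neq 1$. The translation map $\beta\mapsto\gamma\beta$ is a bijection of $G$ onto itself, so re-indexing the sum by $\gamma\beta$ leaves its value unchanged; combined with the multiplicativity $\chi(\gamma\beta)=\chi(\gamma)\chi(\beta)$ this yields
$$\chi(\gamma)\,S=\sum_{\beta\in G}\chi(\gamma\beta)=\sum_{\beta\in G}\chi(\beta)=S.$$
Hence $(\chi(\gamma)-1)S=0$, and since $\chi(\gamma)\neq 1$ we conclude $S=0$.

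For the second sum the argument is perfectly dual. Fix the non-trivial element $\beta$ and put $T=\sum_{\chi\in\widehat{G}}\chi(\beta)$. The crucial input is that the characters of $G$ \emph{separate points}, so there exists $\psi\in\widehat{G}$ with $\psi(\beta)\neq 1$; this is the one genuinely non-formal step and is where I would invoke the structure of finite abelian groups. Granting it, since $\widehat{G}$ is itself a group the map $\chi\mapsto\psi\chi$ permutes $\widehat{G}$, whence
$$\psi(\beta)\,T=\sum_{\chi\in\widehat{G}}(\psi\chi)(\beta)=\sum_{\chi\in\widehat{G}}\chi(\beta)=T,$$
and as before $(\psi(\beta)-1)T=0$ forces $T=0$.

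The main obstacle is precisely the existence of a separating character $\psi$ in the second identity; everything else is the formal shift-invariance trick. I would dispose of it by the standard reduction to cyclic factors: writing $G\cong\mathbb{Z}/d_1\times\cdots\times\mathbb{Z}/d_r$ and taking a coordinate in which $\beta$ is non-trivial, say the $i$-th, the character sending a generator of $\mathbb{Z}/d_i$ to a primitive $d_i$-th root of unity and all other factors to $1$ is non-trivial on $\beta$. This guarantees the required $\psi$ and completes the proof.
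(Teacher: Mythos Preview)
Your argument is correct and is precisely the standard translation-invariance proof of the orthogonality relations for characters of finite abelian groups. Note, however, that the paper does not actually supply a proof of this lemma: it is quoted as Theorem~5.4 of Lidl--Niederreiter \cite{nieder} and used as a black box, so there is no in-paper proof to compare against. Your write-up is essentially the textbook proof found in that reference, including the one non-trivial point (characters separate points of a finite abelian group), which you handle correctly via the cyclic decomposition.
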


\begin{lemma}\cite{castro}\label{m}
Let $\chi$ be a non-trivial multiplicative character of order $r$ and $\psi$ be a non-trivial additive character of $\mathbb{F}_{q^n}$. Let $f,~g$ be rational functions in $\mathbb{F}_{q^n}(x)$ such that $f\neq yh^r,$ for any $y\in \mathbb{F}_{q^n}$,  $h\in \mathbb{F}_{q^n}(x)$, and $g\neq h^p-h+y$ for any $y\in \mathbb{F}_{q^n}$,  $h\in \mathbb{F}_{q^n}(x).$ Then
$$\big|\sum_{x\in \mathbb{F}_{q^n}\backslash S}
\chi(f(x))\psi(g(x))\big| \leq (\deg(g)_{\infty}+m+m'-m''-2)q^{n/2},$$
where $S$ is the set of poles of $f$ and $g,$ $(g)_{\infty}$ is the pole divisor of $g,$ $m$ is the number of distinct zeros and finite poles of $f$ in $\bar{\mathbb{F}}_q$ (algebraic closure of $\mathbb{F}_q$), $m'$ is the number of distinct poles of $g$ (including $\infty$) and $m''$ is the number of finite poles of $f$ that are poles or zeros of $g.$
\end{lemma}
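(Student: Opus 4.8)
\medskip\noindent\textbf{Proof outline.} Write $Q=q^n$ and $T=\sum_{x\in\mathbb F_Q\setminus S}\chi(f(x))\psi(g(x))$. The estimate is the Riemann hypothesis for curves over finite fields, packaged with a conductor computation: the plan is to bound $|T|$ by $(\deg L)\,Q^{1/2}$, where $L$ is the Artin $L$-function attached to $(\chi,\psi)$, to use $\deg L=\deg(\mathfrak f)-2$ (the $-2$ being $\deg K_{\mathbb P^1}$) with $\mathfrak f$ the conductor divisor of the relevant rank-one sheaf on $\mathbb P^1$, and then to bound $\deg(\mathfrak f)\le\deg(g)_\infty+m+m'-m''$ by a local ramification count. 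This breaks into three stages: a geometric model for $T$, Weil's theorem, and the conductor count.

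First, the geometric model. Writing $\psi=\psi_0(u\,\cdot)$ for a nonzero $u$ and absorbing $u$ into $g$, one may take $\psi$ to be canonical. Since $r\mid Q-1$ we have $\gcd(r,p)=1$, so the compositum of the tame Kummer cover $y^r=f(x)$ and the wild Artin--Schreier cover $z^p-z=g(x)$ is a smooth projective curve $C/\mathbb F_Q$, Galois over $\mathbb P^1_x$ with group $\mathbb Z/r\times\mathbb Z/p$; equivalently, one works with the Kummer--Artin--Schreier sheaf $\mathcal L:=\mathcal L_\chi(f)\otimes\mathcal L_\psi(g)$ on $\mathbb P^1_x$. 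The two displayed hypotheses are the non-degeneracy conditions: $f\neq y_0h^r$ prevents the Kummer factor $\mathcal L_\chi(f)$ from becoming geometrically trivial (in which case $\chi(f(x))$ would be essentially constant), and $g\neq h^p-h+y_0$ keeps the Artin--Schreier factor nontrivial; together they say $\mathcal L$ is geometrically nontrivial and irreducible of rank $1$.

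Next, Weil's bound and the conductor count. In the factorization of the zeta function of $C$ under the Galois action, the factor indexed by $(\chi,\psi)$ is the Artin $L$-function $L(v)=\prod_i(1-\omega_i v)$ of $\mathcal L$, and $T=-\sum_i\omega_i$ up to $O(1)$ boundary terms over $S$ and $\infty$ (which only improve the estimate); by Weil's theorem every reciprocal root has $|\omega_i|=Q^{1/2}$, so $|T|\le(\deg L)\,Q^{1/2}$, and by the Euler--Poincar\'e (Grothendieck--Ogg--Shafarevich) formula on $\mathbb P^1$ one has $\deg L=\deg(\mathfrak f)-2$ for a geometrically nontrivial irreducible rank-one sheaf, where $\mathfrak f=\sum_P a_P\cdot P$ with $a_P$ the local conductor exponent. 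It then remains to estimate $\deg(\mathfrak f)=\sum_P a_P$ place by place: at a zero or finite pole of $f$ of order not divisible by $r$ where $g$ is regular, $\mathcal L$ is tamely ramified and $a_P=1$, contributing at most $m$ in all; at $\infty$ and the finite poles of $g$ — $m'$ places — after replacing $g$ by an equivalent $g-(h^p-h)$ so that the pole order at $P$ becomes coprime to $p$ (which the hypothesis on $g$ permits), the Artin--Schreier part is wildly ramified with $a_P=(\text{pole order of }g\text{ at }P)+1$, contributing $\deg(g)_\infty+m'$; and at the $m''$ places that are poles of $f$ and simultaneously zeros or poles of $g$, the tame contribution of $f$ is already subsumed into the wild one and is not counted again, which removes $m''$. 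Summing, $\deg(\mathfrak f)\le\deg(g)_\infty+m+m'-m''$, whence $|T|\le(\deg(g)_\infty+m+m'-m''-2)Q^{1/2}$, as required.

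The genuinely delicate part is the conductor count: one must fix the local exponents exactly at the wildly ramified places — the value ``pole order $+\,1$'' relies on first normalizing $g$ so its pole orders are coprime to $p$, which is where the hypothesis $g\neq h^p-h+y$ is used — and must handle the overlap $m''$ without double counting; one should also separately treat the degenerate configurations ($f$ or $g$ constant, $L$ of degree $0$, the Kummer or Artin--Schreier part trivial), checking that the bound holds or is vacuous there, the two hypotheses being exactly what excludes them. A purely classical route, avoiding sheaves, is equally viable: realize $T$ through $\#C(\mathbb F_Q)$, apply $|\#C(\mathbb F_Q)-Q-1|\le 2g_C\,Q^{1/2}$ together with the decomposition of the Jacobian of $C$ under the Galois action, and compute $g_C$ by Riemann--Hurwitz with the same tame/wild ramification data; the bookkeeping is identical.
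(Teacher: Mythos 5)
The paper contains no proof of this lemma: it is imported verbatim from Castro and Moreno \cite{castro}, so there is no internal argument to measure yours against. Your strategy is the right one and is the one used in that reference --- express the mixed sum as (minus) a trace of Frobenius on the $H^1$ of a rank-one Kummer--Artin--Schreier object over $\mathbb{P}^1$, apply the Riemann hypothesis for function fields to bound each inverse root by $q^{n/2}$, and compute the degree of the $L$-function as $\deg(\mathfrak f)-2$ by Grothendieck--Ogg--Shafarevich (equivalently, by Riemann--Hurwitz on the covering curve). The identification of the two hypotheses on $f$ and $g$ with geometric nontriviality of the two factors is also correct.

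As a proof, however, the entire content of the lemma sits in the step you defer, and the one piece of reasoning you supply for it does not cover the statement as written. The quantity $m''$ counts finite poles of $f$ that are poles \emph{or zeros} of $g$. Your explanation of the subtraction --- that the tame contribution of $f$ is subsumed into the wild one --- accounts only for the poles-of-$g$ case, where the point is double counted in $m$ and in $m'$ and the local Artin conductor is still (normalized pole order)$\,+1$. At a finite pole of $f$ at which $g$ \emph{vanishes}, the Artin--Schreier factor is unramified, the point is counted only once (in $m$), and there is nothing to subsume; a naive conductor count gives $a_P=1$ there with no saving, so the $-1$ per such point must come from somewhere else (the bookkeeping between the sum over $\mathbb{F}_{q^n}\setminus S$ and the sum over the lisse locus, or from the specific Riemann--Hurwitz computation in \cite{castro}), and your sketch does not produce it. Two smaller points in the same vein: the claim that the boundary terms over $S$ and $\infty$ ``only improve the estimate'' needs to be matched, point by point and with signs, against the $\#(\mathbb{P}^1\setminus U)$ term in the Euler characteristic rather than asserted; and the value $a_P=(\text{pole order of }g)+1$ presupposes the place-by-place normalization of $g$ under $g\mapsto g-(h^p-h)$, after which the normalized pole orders are only bounded above by the original ones. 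None of this suggests the approach fails --- it is exactly the computation carried out in the cited paper --- but as written your argument asserts the inequality $\deg(\mathfrak f)\le\deg(g)_\infty+m+m'-m''$ rather than establishing it, and the partial justification given is inconsistent with the full definition of $m''$.
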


\section{Existence of Primitive Pairs $(\alpha,\alpha+\alpha^{-1})$ in $\mathbb{F}_{q^n}$ with $Tr(\alpha)=a$}
In this section, for every $a \in \mathbb{F}_{q}$, we find a sufficient condition for the existence of primitive pairs $(\alpha,\alpha+\alpha^{-1})$ in $\mathbb{F}_{q^n}$ such that $Tr_{\mathbb{F}_{q^n}|\mathbb{F}_{q}}(\alpha)=a$.\\
Let $l_1,~l_2|q^n-1$. For any $a \in \mathbb{F}_q$, let $N_a(l_1,l_2)$
be the number of $\alpha \in \mathbb{F}_{q^n}$ such that $\alpha$ is $l_1$-free, $\alpha+\alpha^{-1}$ is $l_2$-free and $Tr_{\mathbb{F}_{q^n}|\mathbb{F}_{q}}(\alpha)=a$.   Hence we need to show that $N_a(q^n-1,q^n-1)>0$ for every $a\in\mathbb{F}_q$.
		\begin{lemma}\label{p}
			Let $a \in \mathbb{F}_q,$  and $l_1,~l_2|q^n-1$. Then $N_a(l_1,l_2)>0$ if $q^{n/2-1} >C_qW(l_1)W(l_2)$,
where \[ C_q :=  \left\{
\begin{array}{ll}
      3 & \text{if}~ q~\text{is odd} \\
      2 & \text{if}~q~\text{is even}.\\
\end{array}
\right. \]
\end{lemma}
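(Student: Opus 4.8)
The plan is to express $N_a(l_1,l_2)$ as a weighted character sum using the three characteristic functions already introduced in the Preliminaries: $\rho_{l_1}$ for the $l_1$-free condition on $\alpha$, $\rho_{l_2}$ for the $l_2$-free condition on $\alpha+\alpha^{-1}$, and $\tau_a$ for the prescribed-trace condition. First I would write
\[
N_a(l_1,l_2)=\sum_{\substack{\alpha\in\mathbb{F}_{q^n}^*\\ \alpha+\alpha^{-1}\neq 0}}\rho_{l_1}(\alpha)\,\rho_{l_2}(\alpha+\alpha^{-1})\,\tau_a(\alpha),
\]
substitute the defining sums, and interchange the order of summation so that the inner object becomes, for each choice of divisors $d_1\mid l_1$, $d_2\mid l_2$, characters $\chi_{d_1},\chi_{d_2}$ and each $u\in\mathbb{F}_q$, a sum of the shape $\sum_{\alpha}\chi_{d_1}(\alpha)\chi_{d_2}(\alpha+\alpha^{-1})\hat{\psi_0}(u\alpha)$. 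Collecting the Möbius and $\phi$ weights, the whole expression factors as $\theta(l_1)\theta(l_2)$ times a double sum over $d_1,d_2$ of $\frac{\mu(d_1)\mu(d_2)}{\phi(d_1)\phi(d_2)}$ times an inner sum over the characters and over $u\in\mathbb{F}_q$.

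The next step is to isolate the \emph{main term}, which comes from $d_1=d_2=1$ (so $\chi_{d_1},\chi_{d_2}$ trivial) together with $u=0$ (so $\hat{\psi_0}$ trivial): this contributes essentially $\frac{\theta(l_1)\theta(l_2)}{q}\,(q^n-1-\varepsilon)$, where $\varepsilon$ is a small correction for the excluded points $\alpha=0$ and $\alpha+\alpha^{-1}=0$ (i.e.\ $\alpha=\pm 1$ in odd characteristic, $\alpha=1$ in even characteristic — this is exactly where the constant $C_q$ will originate). All remaining terms form the \emph{error term}. For each of those I would apply Lemma~\ref{m} with $f(x)=x^{?}(x+x^{-1})^{?}$ interpreted as the appropriate rational function (to be precise, one works with $f(x)=x^{a_1}\cdot\frac{(x^2+1)^{a_2}}{x^{a_2}}$ type expressions so that $\chi_{d_1}(x)\chi_{d_2}(x+x^{-1})=\chi(f(x))$ for a single character $\chi$ of suitable order $r=\mathrm{lcm}(d_1,d_2)$) and $g(x)=u\,x$ when $u\neq 0$, $g=0$ when $u=0$. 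The point is to check that $f$ is not a constant times an $r$-th power (true because $x+x^{-1}=\frac{x^2+1}{x}$ has simple zeros distinct from $0$, so at least one of $x$, $x^2+1$ appears to a non-multiple-of-$r$ power whenever $(d_1,d_2)\neq(1,1)$) and that $g=ux$ is never of the form $h^p-h+y$ (clear for $u\neq 0$ since a linear polynomial is not an additive polynomial composed as $h^p-h$ plus a constant unless it is constant). Then Lemma~\ref{m} bounds each inner character sum by roughly $(\deg(g)_\infty+m+m'-m''-2)q^{n/2}$; tallying the parameters $m,m',m''$ for our specific $f,g$ over all $d_1,d_2,u$ and summing the geometric-type series $\sum_{d_1\mid l_1}\frac{|\mu(d_1)|}{\phi(d_1)}\cdot(\text{something})$, which telescopes into factors of $W(l_1)$ and $W(l_2)$, yields an error bound of the form $\theta(l_1)\theta(l_2)\,W(l_1)W(l_2)\,C_q\,q^{n/2}$ (after also summing over $u\in\mathbb{F}_q$, which contributes the extra factor of $q$ that cancels the $1/q$ out front, leaving $q^{n/2}$ rather than $q^{n/2-1}$ — I would track the exact bookkeeping so the final inequality reads cleanly).

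Comparing main term and error term, $N_a(l_1,l_2)>0$ is guaranteed once $\frac{\theta(l_1)\theta(l_2)}{q}(q^n-1-\varepsilon) > \theta(l_1)\theta(l_2)W(l_1)W(l_2)C_q q^{n/2}$; dividing through by $\theta(l_1)\theta(l_2)$ and rearranging gives the stated condition $q^{n/2-1}>C_q W(l_1)W(l_2)$ (absorbing the harmless $-1-\varepsilon$ into the inequality). I expect the main obstacle to be the careful determination of the parameters $\deg(g)_\infty$, $m$, $m'$, $m''$ in Lemma~\ref{m} for the composite function $f$ built from both $\chi_{d_1}(\alpha)$ and $\chi_{d_2}(\alpha+\alpha^{-1})$ simultaneously, especially keeping track of the shared pole at $\alpha=0$ and the zeros of $x^2+1$, and of the degenerate sub-cases (one of $d_1,d_2$ trivial, or $u=0$ with $(d_1,d_2)\neq(1,1)$, where the additive character drops out and the bound improves). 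Getting these counts right is exactly what pins down the constant $C_q$ and the precise power $q^{n/2-1}$ rather than something weaker; the rest is routine summation of multiplicative weights into the functions $W(\cdot)$.
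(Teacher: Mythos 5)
Your overall architecture matches the paper's proof exactly: expand $N_a(l_1,l_2)$ via $\rho_{l_1}$, $\rho_{l_2}$, $\tau_a$, isolate the $(\chi_1,\chi_1,u=0)$ main term $\frac{\theta(l_1)\theta(l_2)}{q}(q^n-1)$, bound every other term by the Castro--Moreno estimate (Lemma~\ref{m}) with $g(x)=ux$, and sum the M\"obius weights into $W(l_1)W(l_2)$. The one substantive error is your account of where $C_q$ comes from. You attribute it to a boundary correction $\varepsilon$ for the points where $\alpha+\alpha^{-1}=0$, which you moreover locate at $\alpha=\pm1$; in fact $\alpha+\alpha^{-1}=0$ iff $\alpha^2=-1$, and no such correction enters the main term at all. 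The constant $C_q$ is precisely the Weil-bound coefficient $\deg(g)_\infty+m+m'-m''-2$ for the rational function $F(x)=x^{n_1+q^n-1-n_2}(x^2+1)^{n_2}$: in odd characteristic $x(x^2+1)$ has three distinct roots, giving $1+3+1-1-2=\,$\ldots\ i.e.\ the coefficient $3$; in even characteristic $x^2+1=(x+1)^2$ collapses two of those roots, giving $2$. Your closing remark that the parameter counts in Lemma~\ref{m} ``pin down $C_q$'' is the correct instinct, but it contradicts your earlier $\varepsilon$-explanation, and since the entire content of the lemma is the pair (constant $C_q$, exponent $n/2-1$), deferring exactly that computation while mis-anticipating its source is a real gap rather than routine bookkeeping.

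A second, smaller point: you wave at the degenerate case where $F$ is a constant times a $(q^n-1)$-th power, but the paper has to argue explicitly that $x^{n_1+q^n-1-n_2}(x^2+1)^{n_2}=yH^{q^n-1}$ forces $n_1=n_2=0$, i.e.\ that only $(\chi_1,\chi_1)$ escapes Lemma~\ref{m}; and for that pair with $u\neq0$ one falls back on orthogonality of additive characters (Lemma~\ref{a}) to get the bound $q-1$. Your non-$r$-th-power argument via the exponents of $x$ and $x^2+1$ is sound in spirit and would deliver this, but it needs to be carried out for the specific maximal-order character $\chi_{q^n-1}$ (or your $\mathrm{lcm}(d_1,d_2)$ variant) before the lemma is actually proved.
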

\begin{proof} By definition,
\begin{equation}\label{1}
N_{a}(l_1,l_2)=\sum_{\alpha\in \mathbb{F}_{q^n}^*}\rho_{l_1}(\alpha)\rho_{l_2}(\alpha+\alpha^{-1})\tau_a(\alpha).
\end{equation}

Now \eqref{1} gives
\begin{equation}\label{2}	 N_a(l_1,l_2)=\frac{\theta(l_1)\theta(l_2)}{q}\sum_{d_1|l_1,~d_2|l_2}\frac{\mu(d_1)}{\phi(d_1)}\frac{\mu(d_2)}{\phi(d_2)}\sum_{\chi_{d_1},\chi_{d_2}}\pmb{\chi}_
a(\chi_{d_1},\chi_{d_2}),
\end{equation}
where
\begin{align*}
\pmb{\chi}_a(\chi_{d_1},\chi_{d_2})&=\sum_{u\in{{\mathbb{F}_q}}}\psi_0(-au)\sum_{\alpha \in\mathbb{F}_{q^n}^*}\chi_{d_1}(\alpha)\chi_{d_2}(\alpha+\alpha^{-1})\hat\psi_0(u\alpha).\\
\end{align*}
As we know that  $\chi_{d_i}(x)=\chi_{q^n-1}(x^{n_i})$ for $i=1,~2,$ and some  $n_i \in \{0, 1,2,\cdots,q^n-2\}.$  Thus
\begin{align*}
\pmb{\chi}_a(\chi_{d_1},\chi_{d_2})&=\sum_{u\in{{\mathbb{F}_q}}}\psi_0(-au)\sum_{\alpha\in\mathbb{F}_{q^n}^*}\chi_{q^n-1}(\alpha^{n_1}(\alpha^2+1)^{n_2}(\alpha)^{q^n-n_2-1})\hat\psi_0(u\alpha)\\
		&=\sum_{u\in{{\mathbb{F}_q}}}\psi_0(-au)\sum_{\alpha \in\mathbb{F}_{q^n}^*}\chi_{q^n-1}(F(\alpha))\hat{\psi}_0(u\alpha),
\end{align*}
where $F(x)=x^{n_1+q^n-1-n_2}(x^2+1)^{n_2}\in \mathbb{F}_{q^n}[x]$ for some $0 \leq n_1,n_2 < q^n-1.$

If $F(x) \neq yH^{q^n-1}$ for any $y \in \mathbb{F}_{q^n}$ and $H \in \mathbb{F}_{q^n}[x]$ then using Lemma \ref{m}, if $q$ is odd,
$$|\pmb{\chi}_a| \leq (4-1)q^{n/2}=3q^{n/2}.$$
On the other hand, if $q$ is even, then $x^2 +1 = (x+1)^2$ and this can be sharpened to
$$|\pmb{\chi}_a| \leq (3-1)q^{n/2}=2q^{n/2},$$
i.e., $$|\pmb{\chi}_a| \leq C_qq^{n/2}.$$
So let $F=yH^{q^n-1}$ for some $y \in \mathbb{F}_{q^n}$ and $H \in \mathbb{F}_{q^n}[x]$. Then
\begin{equation}\label{3}
  x^{n_1+q^n-1-n_2}(x^2+1)^{n_2}=yH(x)^{q^n-1},
\end{equation}
for some $y \in \mathbb{F}_{q^n}$ and $H \in \mathbb{F}_{q^n}[x].$ Now
\eqref{3} implies that $ (x^2+1)^{n_2}|H^{q^n-1}.$  Hence $n_2=0$ or

\begin{equation}\label{4}
			 x^{n_1+q^n-1-n_2}=y(x^2+1)^{q^n-1-n_2}A^{(q^n-1)},
\end{equation}
where $A(x)=H(x)/(x^2+1) \in \mathbb{F}_{q^n}[x].$ From
\eqref{4}, we observe  that  $(x^2+1)^{q^n-1-n_2}|x^{n_1+q^n-1-n_2}$, which is possible only if $q^n-1=n_2,$ a contradiction. Hence $n_2=0.$ Putting this in \eqref{3}, we get
$ x^{n_1+q^n-1}=yH^{q^n-1}\Rightarrow n_1=(k_1-1)(q^n-1),$ where $k_1$ is the degree of $H(x).$ This is possible only if $k_1=1$ and hence $n_1=0.$
Thus, in this case $(\chi_{d_1},\chi_{d_2})=(\chi_{1},\chi_{1}).$ Additionally if, $u \neq 0$ then using Lemma \ref{a}, we get $$ |\pmb{\chi}_a(\chi_{d_1},\chi_{d_2})|=q-1 \leq C_q q^{n/2+1}.$$

Hence
$|\pmb{\chi}_a(\chi_{d_1},\chi_{d_2})|\leq C_q q^{n/2+1},$ when $(\chi_{d_1},\chi_{d_2},u)\neq (\chi_1,\chi_1,0).$
Thus, using \eqref{2} we get
\begin{equation}\label{5}
N_a(l_1,l_2)\geq \frac{\theta(l_1)\theta(l_2)}{q}(q^n-1-C_q q^{n/2+1}(W(l_1)W(l_2)-1)).
\end{equation}
Hence $N_a(l_1,l_2)> 0 $ if $q^{n/2}>q^{-n/2+1}+C_q q(W(l_1)W(l_2)-1),$ i.e., if $q^{n/2-1} >C_qW(l_1)W(l_2).$
Hence the result follows.
\end{proof}
In the next lemma, we give upper bounds for the absolute values of $N_a(sl,l)-N_a(l,l)$ and $N_a(l,sl)-N_a(l,l)$.
\begin{lemma}\label{z}
Let $l|q^n-1$ and $s$ any prime dividing $q^n-1$ but not $l$. Then
$$|N_a(sl,l)-N_a(l,l)|\leq \frac{C_q\theta(l)^2\theta(s)}{q}W(l)^2q^{n/2+1}.$$ Also
$$|N_a(l,sl)-N_a(l,l)|\leq \frac{C_q\theta(l)^2\theta(s)}{q}W(l)^2q^{n/2+1}.$$
\end{lemma}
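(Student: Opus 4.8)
The plan is to expand $N_a(sl,l)$ via the character formula \eqref{2} and to strip off exactly the part already accounted for by $N_a(l,l)$. First I would record that, since $s$ is a prime not dividing $l$, the function $\theta$ is multiplicative here, $\theta(sl)=\theta(s)\theta(l)$, and every divisor $d_1$ of $sl$ is either a divisor of $l$ or of the shape $sd_1'$ with $d_1'\mid l$. Inserting this dichotomy into \eqref{2} and separating the divisors $d_1\mid l$, whose contribution reproduces $N_a(l,l)$, from the remaining divisors $d_1=sd_1'$, I obtain
\begin{equation*}
N_a(sl,l)-N_a(l,l)=\frac{\theta(s)\theta(l)^2}{q}\sum_{d_1'\mid l,\;d_2\mid l}\frac{\mu(sd_1')\mu(d_2)}{\phi(sd_1')\phi(d_2)}\sum_{\chi_{sd_1'},\chi_{d_2}}\pmb{\chi}_a(\chi_{sd_1'},\chi_{d_2}).
\end{equation*}
The decisive point is that each surviving character $\chi_{sd_1'}$ has order divisible by $s$, so it is nontrivial.

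Next I would bound the surviving sum term by term. Since $\chi_{sd_1'}$ is nontrivial, its exponent $n_1$ is nonzero, so the polynomial $F$ occurring in $\pmb{\chi}_a$ is not of the form $yH^{q^n-1}$; consequently the estimate established in the proof of Lemma \ref{p} applies and gives $|\pmb{\chi}_a(\chi_{sd_1'},\chi_{d_2})|\le C_q q^{n/2+1}$ (for each $u$ the inner $\alpha$-sum is $\le C_q q^{n/2}$ by the Weil-type bounds used in Lemma \ref{p}, and summing over the $q$ values of $u$ supplies the factor $q$). Passing to absolute values, I would use that there are $\phi(sd_1')$ characters of order $sd_1'$ and $\phi(d_2)$ of order $d_2$, so the denominators $\phi(sd_1')$ and $\phi(d_2)$ cancel against these counts; together with $|\mu(sd_1')|=|\mu(d_1')|$ (as $s\nmid d_1'$) this leaves $C_q q^{n/2+1}\sum_{d_1'\mid l}|\mu(d_1')|\sum_{d_2\mid l}|\mu(d_2)|$. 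Finally the identity $\sum_{d\mid l}|\mu(d)|=W(l)$ collapses both inner sums to $W(l)$, and multiplying back by the prefactor $\theta(s)\theta(l)^2/q$ yields precisely $\frac{C_q\theta(s)\theta(l)^2}{q}W(l)^2 q^{n/2+1}$.

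For the second inequality I would run the same argument with the roles of the two factors of $F$ exchanged. Now the surviving terms carry a nontrivial character $\chi_{sd_2'}$ attached to $(x^2+1)^{n_2}$, which forces $n_2\neq0$; by the same dichotomy from Lemma \ref{p} this again guarantees $F\neq yH^{q^n-1}$, hence $|\pmb{\chi}_a|\le C_q q^{n/2+1}$. The character counting and the collapse to $W(l)^2$ are identical, so the same right-hand side is obtained for $|N_a(l,sl)-N_a(l,l)|$.

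The step I expect to be the crux is the uniform estimate $|\pmb{\chi}_a|\le C_q q^{n/2+1}$ over all surviving pairs: one must verify that a character of order divisible by $s$ can never render $F$ a constant multiple of a $(q^n-1)$-st power, so that the relevant hypotheses hold for every $u\in\mathbb{F}_q$ and the summation over these $q$ values supplies exactly the extra factor $q$ giving the exponent $n/2+1$. The remaining ingredients, namely multiplicativity of $\theta$, the count of characters of each order, and the identity $\sum_{d\mid l}|\mu(d)|=W(l)$, are routine.
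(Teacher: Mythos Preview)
Your argument mirrors the paper's proof exactly: both split the divisor sum over $d_1\mid sl$ into the part with $d_1\mid l$ (identified with $N_a(l,l)$) and the part with $s\mid d_1$, then apply the uniform bound $|\pmb{\chi}_a|\le C_q q^{n/2+1}$ to the surviving terms and collapse the character count to $W(l)^2$ (the paper phrases this as $W(l)\{W(sl)-W(l)\}=W(l)^2$, you as $\sum_{d\mid l}|\mu(d)|=W(l)$, which is the same thing). One shared imprecision worth noting: the piece with $d_1\mid l$ carries the prefactor $\theta(sl)\theta(l)/q=\theta(s)\theta(l)^2/q$, so it actually equals $\theta(s)N_a(l,l)$ rather than $N_a(l,l)$; the displayed identity in both the paper and your write-up is therefore really a formula for $N_a(sl,l)-\theta(s)N_a(l,l)$, which is precisely the quantity used downstream in Theorem~\ref{aa}.
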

\begin{proof}
By definition, we have

$N_a(sl,l)-N_a(l,l)=\frac{\theta(l)^2\theta{(s)}}{q}\Big\{\sum_{s|d_1|sl}\sum_{d_2|l}\frac{\mu(d_1)}{\phi(d_1)}\frac{\mu(d_2)}{\phi(d_2)}\sum_{\chi_{d_1},\chi_{d_2}}\pmb{\chi}_
a(\chi_{d_1},\chi_{d_2})\Big\}.\\$

Using $|\pmb{\chi}_
a(\chi_{d_1},\chi_{d_2})|\leq C_qq^{n/2+1},$ we get
$$|N_a(sl,l)-N_a(l,l)|\leq \frac{\theta(l)^2\theta{(s)}}{q}C_q q^{n/2+1}W(l)\{W(sl)-W(l)\}.$$
Since $W(sl)=2W(l),$ we get
$$|N_a(sl,l)-N_a(l,l)|\leq \frac{\theta(l)^2\theta{(s)}}{q}C_qq^{n/2+1}W(l)^2.$$
Similarly
$$|N_a(l,sl)-N_a(l,l)|\leq \frac{C_q\theta(l)^2\theta(s)}{q}W(l)^2q^{n/2+1}.$$
\end{proof}
Next, we obtain an extension of the sieving Lemma 3.7 of \cite{cohens}. The  proof follows on the lines  of  Proposition 5.2 of \cite{gkape}, but  is given again for completeness..
\begin{lemma}\label{bb}
Suppose $l|q^n-1$ and  $\{p_1,\ldots,p_r\}$ is the collection of all the primes dividing $q^n-1$ but not $l $. Then
\begin{equation}\label{*eq}
N_a(q^n - 1, q^n - 1) \geq \sum_{i=1}^{r}N_a(p_il, l) +\sum_{i=1}^{r}N_a(l, p_il) - (2r - 1)N_a(l, l).
\end{equation}
\end{lemma}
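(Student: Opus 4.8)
The plan is to prove the inclusion–exclusion (sieving) inequality \eqref{*eq} by unwinding the definition of $l$-freeness into a sum of characteristic functions of $p_i l$-freeness. The key combinatorial fact is that an element $\xi$ is $(q^n-1)$-free if and only if it is $l$-free \emph{and} $p_i l$-free for every $i = 1, \dots, r$, since $q^n - 1 = l \cdot \prod_{i=1}^r p_i^{e_i}$ and divisibility by a prime power $p_i^{e_i}$ is already ruled out once divisibility by $p_i^1$ is (in the sense of the $e$-free definition, only the radical of $e$ matters). So first I would record this observation, perhaps as a one-line remark, identifying the event ``$\alpha$ is $(q^n-1)$-free'' with the intersection $\bigcap_i \{\alpha \text{ is } p_i l\text{-free}\}$ among $l$-free elements.

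Next I would apply the standard Bonferroni-type bound for the indicator of an intersection: for events $A_1, \dots, A_r$ contained in a common event $B$ (here $B$ is ``$l$-free and $\alpha + \alpha^{-1}$ is $l$-free and $Tr(\alpha) = a$''), one has $\mathbf{1}_{\bigcap A_i} \ge \sum_i \mathbf{1}_{A_i} - (r-1)\mathbf{1}_B$. I would apply this once on the $\alpha$-side (with $A_i = \{\alpha \text{ is } p_i l\text{-free}\}$, keeping ``$\alpha+\alpha^{-1}$ is $l$-free'' fixed) to get $N_a(q^n-1, l) \ge \sum_i N_a(p_i l, l) - (r-1) N_a(l,l)$, and symmetrically on the $\alpha+\alpha^{-1}$-side to get $N_a(q^n-1, q^n-1) \ge N_a(q^n-1,l) + \sum_i \left(N_a(q^n-1, p_i l) - N_a(q^n-1,l)\right)$ after another Bonferroni step; combining the two and crudely bounding $N_a(q^n-1, p_i l) \ge N_a(l, p_i l)$ and $N_a(q^n-1, l) \ge N_a(l,l)$ where needed would then collapse to the claimed
\[
N_a(q^n - 1, q^n - 1) \geq \sum_{i=1}^{r}N_a(p_il, l) +\sum_{i=1}^{r}N_a(l, p_il) - (2r - 1)N_a(l, l).
\]
Rather than juggling two Bonferroni steps, the cleaner route is to note directly that $\rho_{q^n-1}(\alpha) \ge \sum_i \rho_{p_i l}(\alpha) - (r-1)\rho_l(\alpha)$ as real-valued functions on $\mathbb{F}_{q^n}^*$ (an identity of characteristic functions of nested $e$-free sets), and likewise with $\alpha$ replaced by $\alpha + \alpha^{-1}$; multiplying the two inequalities and the nonnegative factor $\tau_a(\alpha)$, summing over $\alpha$, and discarding the nonnegative cross-terms $\rho_{p_i l}(\alpha)\,\rho_{p_j l}(\alpha+\alpha^{-1})$ that are not needed gives the bound after expansion, since $\left(\sum_i \rho_{p_i l}(\alpha) - (r-1)\rho_l(\alpha)\right)\left(\sum_j \rho_{p_j l}(\beta) - (r-1)\rho_l(\beta)\right) \ge \rho_l(\beta)\sum_i \rho_{p_i l}(\alpha) + \rho_l(\alpha)\sum_j \rho_{p_j l}(\beta) - (2r-1)\rho_l(\alpha)\rho_l(\beta)$.

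The main obstacle, and the place to be careful, is precisely justifying that last quadratic-form inequality and the sign of the terms one throws away: $\tau_a$ is genuinely a $\{0,1\}$-valued characteristic function so its nonnegativity is immediate, and each $\rho_m$ is $\{0,1\}$-valued as the characteristic function of the $m$-free elements, so all products appearing are nonnegative and the discarded terms $\sum_{i,j}\rho_{p_i l}(\alpha)\rho_{p_j l}(\alpha+\alpha^{-1})$ (minus their counted part) can indeed be dropped; the only real content is the elementary lemma $x_1 \cdots$— I mean, the elementary inequality that for $0/1$ quantities $a_i, b_j, c, d$ with $a_i \le c$, $b_j \le d$ one has $\left(\sum a_i - (r-1)c\right)\left(\sum b_j - (r-1)d\right) \ge d\sum a_i + c \sum b_j - (2r-1)cd$, which follows by expanding and using $\sum a_i \le rc$, $\sum b_j \le rd$ together with nonnegativity of $\sum_{i,j} a_i b_j$. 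I would present this elementary inequality first, then apply it pointwise with $(a_i, c) = (\rho_{p_i l}(\alpha), \rho_l(\alpha))$ and $(b_j, d) = (\rho_{p_j l}(\alpha+\alpha^{-1}), \rho_l(\alpha+\alpha^{-1}))$, multiply by $\tau_a(\alpha) \ge 0$, sum over $\alpha \in \mathbb{F}_{q^n}^*$, and recognize each resulting sum as the corresponding $N_a(\cdot,\cdot)$ to conclude.
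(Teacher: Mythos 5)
Your overall strategy --- establish a pointwise inequality between indicator functions, multiply by the nonnegative factor $\tau_a(\alpha)$, and sum over $\alpha$ --- is sound and is essentially the paper's argument, and your elementary quadratic inequality is correct (its two sides differ by $\bigl(\sum_i a_i - rc\bigr)\bigl(\sum_j b_j - rd\bigr) \ge 0$). But there is a genuine gap at the step ``multiplying the two inequalities'': from $\rho_{q^n-1}(\alpha) \ge X := \sum_i \rho_{p_il}(\alpha) - (r-1)\rho_l(\alpha)$ and $\rho_{q^n-1}(\alpha+\alpha^{-1}) \ge Y$ you cannot conclude $\rho_{q^n-1}(\alpha)\rho_{q^n-1}(\alpha+\alpha^{-1}) \ge XY$, because $X$ and $Y$ may be negative. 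Indeed, if $\alpha$ and $\alpha+\alpha^{-1}$ are both $l$-free but neither is $p_il$-free for any $i$, then $X = Y = -(r-1)$, so $XY = (r-1)^2 > 0$ while the left side is $0$. (Relatedly, these are inequalities, not identities, of characteristic functions; and in your first route the ``crude bounds'' $N_a(q^n-1,p_il) \ge N_a(l,p_il)$ and $N_a(q^n-1,l)\ge N_a(l,l)$ point the wrong way, since being $(q^n-1)$-free is the more restrictive condition.)

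The repair is to delete the intermediate quantity $XY$ and verify directly the composite pointwise inequality you actually need, namely
\begin{equation*}
\rho_{q^n-1}(\alpha)\,\rho_{q^n-1}(\alpha+\alpha^{-1}) \;\ge\; d\sum_{i=1}^{r} a_i + c\sum_{j=1}^{r} b_j - (2r-1)cd ,
\end{equation*}
with $a_i = \rho_{p_il}(\alpha)$, $c = \rho_l(\alpha)$, $b_j = \rho_{p_jl}(\alpha+\alpha^{-1})$, $d = \rho_l(\alpha+\alpha^{-1})$. This composite inequality is true, and proving it is exactly the paper's one-line counting argument: if $c=0$ or $d=0$ the right side vanishes while the left side is $\ge 0$; if $c=d=1$ the right side equals $\sum_i a_i + \sum_j b_j - (2r-1) \le 1$, with equality to $1$ precisely when all $a_i$ and $b_j$ equal $1$, i.e.\ (by your opening observation on nested freeness) when both $\alpha$ and $\alpha+\alpha^{-1}$ are $(q^n-1)$-free, and otherwise the right side is $\le 0$. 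Multiplying by $\tau_a(\alpha)$ and summing over $\alpha$ then yields \eqref{*eq} exactly as you intended.
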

\begin{proof}

The left side of \eqref{*eq} counts every  $\alpha \in \mathbb{F}_{q^n}$ for which $\alpha$ has trace $a$ and both $\alpha$ and $\alpha +1/\alpha$ are primitive. Thus, it counts $1$ for every $\alpha$ for which $\alpha$ has trace $a$, both $\alpha$ and $\alpha+ 1/\alpha$ are $l$-free, and for each $i=1, \ldots, r$, both $\alpha$ and $\alpha+ 1/\alpha$ are $p_i$-free.  Observe that the right side of \eqref{*eq} scores $1$ for each such $\alpha$, whereas, for any other $\alpha \in \mathbb{F}_{q^n}$  it scores an integer $\leq 0$.  This completes the proof.

\end{proof}

By taking $l_1=l_2=q^n-1$ in Lemma \ref{p}, we see that $(q,n)\in \mathfrak{P},$ if
\begin{equation}\label{***}
q^{n/2-1}> C_qW(q^n-1)^2.
\end{equation}
 We further improve this criterion.
\begin{thm}\label{aa}
Let $l|q-1$ and $\{p_1,~p_2, \ldots,p_r\}$ be the collection of all  the primes dividing $q^n-1,$ but not $l$. Suppose $\delta =1-2\sum_{i=1}^{r}\frac{1}{p_i}$ and $\Delta=\frac{2r-1}{\delta}+2$ and assume $\delta >0.$ If
\begin{equation}\label{****}
q^{n/2-1} > C_qW(l)^2\Delta,
\end{equation}
then $(q,n)\in \mathfrak{P}$.
\end{thm}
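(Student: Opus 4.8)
The plan is to combine the sieving inequality of Lemma~\ref{bb} with the bounds from Lemmas~\ref{p} and~\ref{z}, chaining the estimates so that the ``error'' terms collapse into the factor $\Delta$.

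First I would apply Lemma~\ref{bb} with the given $l$ (noting $l\mid q-1\mid q^n-1$), which yields
\[
N_a(q^n-1,q^n-1)\ \geq\ \sum_{i=1}^r N_a(p_il,l)+\sum_{i=1}^r N_a(l,p_il)-(2r-1)N_a(l,l).
\]
Next I would rewrite the right-hand side by adding and subtracting copies of $N_a(l,l)$: each $N_a(p_il,l)$ becomes $N_a(l,l)+\bigl(N_a(p_il,l)-N_a(l,l)\bigr)$, and similarly for $N_a(l,p_il)$. The $N_a(l,l)$ terms total $2r\cdot N_a(l,l)-(2r-1)N_a(l,l)=N_a(l,l)$, so
\[
N_a(q^n-1,q^n-1)\ \geq\ N_a(l,l)-\sum_{i=1}^r\bigl|N_a(p_il,l)-N_a(l,l)\bigr|-\sum_{i=1}^r\bigl|N_a(l,p_il)-N_a(l,l)\bigr|.
\]
Then I would feed in Lemma~\ref{z}, which bounds each of the $2r$ difference terms by $\tfrac{C_q\theta(l)^2\theta(p_i)}{q}W(l)^2q^{n/2+1}$, and use $\theta(p_i)=1-1/p_i\le 1$ together with $\theta(p_i)\le 1$ only where needed; more precisely, since $\theta(p_i)=1-\frac1{p_i}$ I would keep the factor and note $\sum_{i=1}^r 2\theta(p_i)=2r-2\sum_{i=1}^r\frac1{p_i}$\,$\le$\,$2r-\delta\cdot$(nothing directly) — the cleaner route is to bound $\theta(p_i)\le 1$ so the total error is at most $\tfrac{2r\,C_q\theta(l)^2}{q}W(l)^2q^{n/2+1}$, but to get the sharper $\Delta$ one uses the lower bound $\delta=1-2\sum 1/p_i>0$ differently: estimate $N_a(l,l)$ from below by Lemma~\ref{p}'s inequality \eqref{5}, namely $N_a(l,l)\ge \tfrac{\theta(l)^2}{q}\bigl(q^n-1-C_qq^{n/2+1}(W(l)^2-1)\bigr)$, and observe this is positive and in fact large compared with the error sum precisely when $q^{n/2-1}>C_qW(l)^2\Delta$.

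Concretely, after substituting both bounds, factoring out $\tfrac{\theta(l)^2}{q}q^{n/2+1}$, and dividing through, positivity of $N_a(q^n-1,q^n-1)$ reduces to an inequality of the shape $q^{n/2-1}>C_qW(l)^2\bigl(1+\tfrac{2r-1+(\text{correction})}{\delta}\bigr)$; the role of $\delta>0$ is exactly to make the denominator in the $\Delta$ expression meaningful when one groups the $2r$ error terms and the $(2r-1)N_a(l,l)$ deficit together and divides by the ``net surplus'' coefficient, which works out to $\delta$. I would track the arithmetic carefully so that the $\theta(p_i)$ factors in Lemma~\ref{z} recombine (via $\sum 2/p_i = (1-\delta)$) to give precisely $\Delta=\tfrac{2r-1}{\delta}+2$ rather than a weaker constant, then conclude $N_a(q^n-1,q^n-1)>0$ for every $a\in\mathbb F_q$, i.e.\ $(q,n)\in\mathfrak P$.

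The main obstacle I expect is the bookkeeping in the last step: making sure the $\theta(p_i)=1-1/p_i$ weights coming out of Lemma~\ref{z} are handled so that $\sum_{i=1}^r 1/p_i$ reassembles into $\delta$ in exactly the right place, and that nothing is lost when passing from the two-sided bounds on the differences to a clean one-sided bound on $N_a(q^n-1,q^n-1)$. The structural inequalities are all in hand; the delicacy is purely in the algebra of combining them to land on the stated constant $\Delta$ rather than a cruder one.
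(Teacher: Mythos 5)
Your overall strategy---Lemma \ref{bb} plus the difference bounds plus the lower bound \eqref{5} for $N_a(l,l)$---is the paper's strategy, but the concrete decomposition you write down does not work, and the place it fails is exactly the spot you defer as ``delicate bookkeeping''. The quantity $N_a(p_il,l)-N_a(l,l)$ is \emph{not} of size $O(W(l)^2q^{n/2})$: what the proof of Lemma \ref{z} actually establishes (its statement is slightly off) is the bound for $N_a(p_il,l)-\theta(p_i)N_a(l,l)$, since $N_a(p_il,l)=\tfrac{\theta(p_il)\theta(l)}{q}\sum_{d_1\mid p_il}(\cdots)$ and only the characters with $p_i\mid d_1$ contribute error, while the $d_1\mid l$ part reproduces $\theta(p_i)N_a(l,l)$, not $N_a(l,l)$. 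Indeed $N_a(l,l)-N_a(p_il,l)$ counts roughly a $1/p_i$ proportion of the $l$-free elements, so it contains a main term of order $q^{n-1}/p_i$, which swamps $W(l)^2q^{n/2}$. Hence your displayed inequality $N_a(q^n-1,q^n-1)\ge N_a(l,l)-\sum_i|N_a(p_il,l)-N_a(l,l)|-\sum_i|N_a(l,p_il)-N_a(l,l)|$, while true, cannot be combined with the quoted bound; if you do combine them you ``prove'' the theorem with the constant roughly $2r+1$ in place of $\Delta$, a strictly stronger statement that would not need $\delta>0$ at all---a sign the step is unsound.

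The missing idea is that the correct centering is at $\theta(p_i)N_a(l,l)$: rewrite the right-hand side of Lemma \ref{bb} as $\sum_{i}\{N_a(p_il,l)-\theta(p_i)N_a(l,l)\}+\sum_{i}\{N_a(l,p_il)-\theta(p_i)N_a(l,l)\}+\delta N_a(l,l)$, which is an identity because $2\sum_{i}\theta(p_i)-(2r-1)=2r-2\sum_{i}1/p_i-(2r-1)=\delta$. Each braced term is bounded below by $-\tfrac{C_q\theta(l)^2\theta(p_i)}{q}W(l)^2q^{n/2+1}$, and the surviving main term is $\delta N_a(l,l)$, not $N_a(l,l)$; dividing the accumulated error coefficient $2\sum_i\theta(p_i)=2r-1+\delta$ by $\delta$ is exactly what produces $\Delta=\tfrac{2r-1}{\delta}+2$ once the $(W(l)^2-1)$ term from \eqref{5} is absorbed. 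You gesture at this (``$\sum 2/p_i$ reassembles into $\delta$'') but never perform the re-centering, and without it the argument either rests on a form of Lemma \ref{z} that is false in general or stalls at your middle display.
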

\begin{proof}
From Lemma \ref{bb}, we deduce that
\begin{align}
N_a(q^n-1,q^n-1)\geq & \sum_{i=1}^{r}\{N_a(p_il,l)-\theta(p_i)N_a(l,l)\}+\{N_a(l,p_il)-\theta(p_i)N_a(l,l)\}\nonumber\\
&+\delta N_a(l,l).
\end{align}
Using Lemma \ref{z}, we get
\begin{align*}
N_a(q^n-1,q^n-1)\geq &\frac{\theta(l)^2}{q}\Big\{\sum_{i=1}^{r}2\theta{(p_i)}(-C_qq^{n/2+1}W(l)^2)\\
&+\delta\{q^n-1-C_qq^{n/2+1}(W(l)^2-1)\}\Big\}.
\end{align*}

\begin{align*}
N_a(q^n-1,q^n-1)\geq& \frac{\theta(l)^2}{q}\delta\Bigg\{\Big(\frac{2\sum_{i=1}^{r}\theta(p_i)}{\delta}+1\Big)\{-C_qq^{n/2+1}W(l)^2\}\\
&+\{q^n-1+C_q q^{n/2+1}\}\Bigg\}.
\end{align*}
Using $\delta=2\sum_{i=1}^{s}\theta(p_i)-(2s-1)$, we get
\begin{align*}
N_a(q^n-1,q^n-1)&\geq \frac{\theta(l)^2}{q}\delta\{-C_q\Delta q^{n/2+1}W(l)^2+q^n-1+C_qq^{n/2+1}\}.
\end{align*}

Since $\delta>0$, $N_a(q^n-1,q^n-1)>0$
 if $q^{n/2-1}> q^{-n/2-1}-C_q+C_qW(l)^2\Delta$, that is, if $q^{n/2-1} > C_qW(l)^2\Delta.$ So if $q^{n/2-1} > C_qW(l)^2\Delta$ then for every $a \in \mathbb{F}_q,$ $\mathbb{F}_{q^n}$ contains a primitive pair $(\alpha,\alpha+\alpha^{-1})$ such that $Tr_{\mathbb{F}_{q^n}|\mathbb{F}_q}(\alpha)=a,$ and hence $(q,n) \in \mathfrak{P}.$

\end{proof}

\section{Exploiting the condition (\ref{***}) through calculation}
From now on we abbreviate  $\omega(q^n-1)$ to $\omega$.
\begin{lemma}\label{g}
Let   $m > 2\times10^{31}$ be a positive integer. Then $W(m) < m^{2/9}.$
\end{lemma}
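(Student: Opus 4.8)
The plan is to prove the bound $W(m) < m^{2/9}$ for all $m > 2\times 10^{31}$ by the standard multiplicative comparison argument used for estimates of this type. Since both sides are multiplicative in $m$, I would write $m = \prod_{i=1}^{\omega} p_i^{a_i}$ with $p_1 < p_2 < \cdots$ the distinct primes dividing $m$, so that $W(m) = 2^{\omega}$ and it suffices to show
\[
\frac{2^{\omega}}{m^{2/9}} = \prod_{i=1}^{\omega} \frac{2}{p_i^{2a_i/9}} \le \prod_{i=1}^{\omega} \frac{2}{p_i^{2/9}} < 1.
\]
The key observation is that the factor $2/p^{2/9}$ is less than $1$ precisely when $p^{2/9} > 2$, i.e. $p > 2^{9/2} = \sqrt{512} \approx 22.6$, so for every prime $p \ge 23$ the corresponding factor is already $< 1$ and only helps. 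Hence the product over all primes dividing $m$ is at most the product of the ``bad'' factors $2/p^{2/9}$ over the primes $p \in \{2,3,5,7,11,13,17,19\}$ that are $< 2^{9/2}$.

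First I would compute the finite product $P := \prod_{p < 2^{9/2}} \dfrac{2}{p^{2/9}}$ over $p \in \{2,3,5,7,11,13,17,19\}$; this is a fixed real number which one checks numerically is greater than $1$ (the primes below $23$ alone are not enough to force the bound), so the crude estimate $W(m)/m^{2/9} \le P$ fails and we genuinely need the largeness hypothesis $m > 2\times 10^{31}$. To use it, I would argue that if $m$ is large then $m$ must contain primes beyond this bad list, or the bad primes must appear to high powers, and in either case the product of the remaining factors drops below $1/P$. Concretely, let $N = 2\cdot 3\cdot 5\cdot 7\cdot 11\cdot 13\cdot 17\cdot 19$ be the product of the bad primes; if $m \le N \cdot K$ for the appropriate threshold there is nothing to do, and if $m > 2\times 10^{31}$ then since $N < 10^{10}$, either some prime $p \ge 23$ divides $m$ — contributing a factor $2/p^{2/9} \le 2/23^{2/9} < 1$, which combined with $P$ one checks gives a product $< 1$ — or else $m$ is composed only of the bad primes but then $m \le N$ forces $m$ small, contradiction; more carefully one tracks that the exponents $a_i$ must be large, and each extra power of $p_i$ multiplies the ratio by $p_i^{-2/9} < 1$, driving it down geometrically.

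The cleanest way to organize the final step is: show that for $m > 2\times 10^{31}$ one has $2^{\omega(m)} < m^{2/9}$ by splitting on whether the radical of $m$ (its squarefree kernel) exceeds some explicit constant. If $\mathrm{rad}(m) = \prod p_i \ge R_0$ for a suitable $R_0$, then since among any set of distinct primes with product $\ge R_0$ the product $\prod 2/p_i^{2/9}$ is $< 1$ (the worst case being the smallest primes, handled by a direct check against $R_0$), we are done. If instead $\mathrm{rad}(m) < R_0$, then $\mathrm{rad}(m)$ lies in a finite explicit list, $2^{\omega(m)} \le 2^{\omega(R_0!)}$ is bounded, while $m^{2/9} > (2\times 10^{31})^{2/9}$ is enormous, so the inequality holds trivially. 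I would choose $R_0$ so that $(2\times 10^{31})^{2/9}$ comfortably exceeds $2^{8} = 256$ (note $(2\times 10^{31})^{2/9} > (10^{31})^{2/9} = 10^{62/9} > 10^{6}$), which makes the second case immediate, and so that the first-case check is just the finite numerical verification that the first eight primes give $\prod 2/p_i^{2/9} < 1$ once one more prime is appended — or, even more simply, that this product over the first eight primes times $2/23^{2/9}$ is $< 1$.

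The main obstacle I anticipate is purely bookkeeping: the naive product over the bad primes $\{2,\dots,19\}$ exceeds $1$, so the proof is not a one-line multiplicativity argument and one must genuinely invoke $m > 2\times 10^{31}$ to rule out the finitely many ``bad'' radicals by brute size comparison. Getting the constant $2\times 10^{31}$ to be exactly what is needed — rather than something larger — requires care: presumably it arises as (slightly more than) the largest $m$ whose radical is a product of small primes for which $2^{\omega} \ge m^{2/9}$ could still hold, or equivalently from optimizing how many small-prime powers one can pile up before $m^{2/9}$ overtakes $2^{\omega}$. I would locate that worst case explicitly (it will involve $m$ being a product of prime powers of the first several primes with carefully chosen exponents) and verify the threshold, but I would not grind through every numerical inequality here.
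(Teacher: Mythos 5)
Your overall strategy (multiplicativity, separating the primes $p<23$ for which $2/p^{2/9}>1$ from the rest, and invoking $m>2\times10^{31}$ to kill the finitely many bad configurations) is the right kind of argument and is close in spirit to the paper's proof, but the decisive numerical step is wrong and the case analysis is miscalibrated as a result. You assert that $P=\prod_{p\le 19}2/p^{2/9}$ times one further factor $2/23^{2/9}$ is $<1$; in fact $P\approx 7.17$ while $2/23^{2/9}\approx 0.996$, so the product is still about $7.14$. The factors $2/p^{2/9}$ for $p=23,29,31,\dots$ are each only barely below $1$, and the cumulative product $\prod_{i\le k}2/p_i^{2/9}$ over the first $k$ primes does not drop below $1$ until $k=21$. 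Consequently there are squarefree integers with as many as $20$ distinct prime factors violating $W(n)<n^{2/9}$ (e.g.\ $n=2\cdot3\cdots71\approx 5.6\times10^{26}$ has $W(n)=2^{20}\approx 1.05\times 10^{6}$ but $n^{2/9}\approx 8.8\times10^{5}$), so choosing $R_0$ merely so that $(2\times10^{31})^{2/9}$ exceeds $2^{8}=256$ cannot work: your ``small radical'' case must absorb every $m$ with $\omega(m)\le 23$, which it can --- since $(2\times10^{31})^{2/9}>2^{23}$ --- but you never identify this as the relevant threshold.

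There is also a structural gap: your ``large radical'' case, as formulated, requires verifying $2^{\omega}<(p_1\cdots p_{\omega})^{2/9}$ for \emph{every} $\omega\ge 21$ (or $24$); reducing to ``the smallest primes'' fixes $\omega$ but does not bound it, so this is not a finite check. The paper avoids this with a splitting device you are missing: having disposed of $\omega(m)\le 23$ via $m^{2/9}>(2\times10^{31})^{2/9}>2^{23}$, it writes $m=m_1m_2$ with $m_1$ carrying the $23$ smallest prime divisors of $m$ --- so $m_1\ge 2\cdot3\cdots83>2\times10^{31}$ and the already-proved bounded case gives $W(m_1)<m_1^{2/9}$ --- while every prime $l$ dividing $m_2$ is at least $89$, where $l^{2/9}>2$ gives $W(m_2)\le m_2^{2/9}$ factor by factor. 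To close your argument you would need either this device or an explicit Chebyshev-type lower bound on $\log(p_1\cdots p_{\omega})$ valid for all large $\omega$, together with corrected numerics for the threshold.
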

\begin{proof}
If $m > 2\times10^{31}$ then $m^{2/9}>2^{23}.$ Hence if $\omega(m)\leq 23$ then $W(m) < m^{2/9}.$ So assume $\omega(m) >23.$   Write $m=m_1m_2$, where $m_1$ and $m_2$ are coprime with each prime dividing $m_1$ one of the smallest $23$  primes dividing $m$ and each prime dividing $m_2$ one of the remaining $\omega(m)-23 $ primes dividing $m$. Thus $W(m)=W(m_1) W(m_2)$, where $m_1\geq 2\cdot3\ldots83$ and $W(m_2)\leq m_2^{2/9}$, since $l^{2/9} > 2$ for any prime $l \geq 89$.  Further, since $m_1 > 2\times10^{31}$,  by the above argument, $W(m_1) <m_1^{2/9}.$
 So $W(m) < m_1^{2/9}m_2^{2/9}=m^{2/9}.$
\end{proof}

\begin{thm}\label{r}

Suppose $q=p^k,$ where $k$ is a positive integer and $p$ is any prime number. Then $(q,n) \in \mathfrak{P}$ for  $n \geq 26$.
\end{thm}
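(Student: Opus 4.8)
The plan is to establish the sufficient condition \eqref{***}, i.e.\ $q^{n/2-1} > C_q W(q^n-1)^2$, for every prime power $q$ and every $n \geq 26$; by Lemma \ref{p} this already forces $(q,n) \in \mathfrak{P}$. I would split the argument according to whether $q^n - 1$ exceeds the threshold $2\times 10^{31}$ of Lemma \ref{g}.

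Suppose first that $q^n - 1 > 2\times 10^{31}$. Then Lemma \ref{g} gives $W(q^n-1) < (q^n-1)^{2/9} < q^{2n/9}$, so $C_q W(q^n-1)^2 < C_q q^{4n/9}$, and \eqref{***} holds as soon as $q^{n/18-1} \geq C_q$. Since $n \geq 26$, $q^{n/18-1} \geq q^{4/9}$, and the only prime powers with $q^{4/9} < C_q$ are small ones ($q \leq 11$ in odd characteristic, $q \in \{2,4\}$ in characteristic $2$); for each of these the standing hypothesis $q^n > 2\times 10^{31}$ by itself forces $n$ large enough (e.g.\ $n \geq 104$ for $q=2$, $n \geq 66$ for $q=3$) that $q^{n/18-1} > C_q$ anyway. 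Hence every pair with $q^n - 1 > 2\times 10^{31}$ lies in $\mathfrak{P}$.

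It remains to treat the pairs with $n \geq 26$ and $q^n - 1 \leq 2\times 10^{31}$. From $q^{26} \leq q^n \leq 2\times 10^{31}+1$ we get that $q$ is bounded (roughly $q \leq 16$), and for each such $q$ the admissible $n$ form an explicit finite range (for $q=2$ it is $26 \leq n \leq 103$, and the ranges shrink rapidly as $q$ grows), leaving only finitely many — a couple of hundred — pairs. For each of these I would factor $q^n-1$, compute $\omega(q^n-1)$, and check \eqref{***}; in the few borderline pairs where \eqref{***} is an equality or just fails — for instance $(q,n) = (2,36)$, where $\omega(2^{36}-1) = 8$ makes \eqref{***} an equality — I would invoke the sharper positivity condition $q^n - 1 > C_q q^{n/2+1}\bigl(W(q^n-1)^2 - 1\bigr)$ coming directly from \eqref{5}, or the sieving criterion \eqref{****} of Theorem \ref{aa} with $l \mid q-1$ built from the larger prime factors and the sieved set $\{p_1,\dots,p_r\}$ chosen so that $\delta > 0$.

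The main obstacle is precisely this finite computational range, and within it the characteristic-$2$ fields: there $l \mid q-1$ forces $l = 1$, so Theorem \ref{aa} is unavailable and one must lean on the exact form of \eqref{5} (or, if even that is tight, on a direct verification that primitive pairs with the prescribed trace exist). Checking that these sharper tools really do dispose of all residual pairs for $n \geq 26$, so that no genuine exception remains, is where the actual effort lies; the estimates in the first two steps are routine.
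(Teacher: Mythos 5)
Your proposal is correct and follows essentially the same route as the paper: Lemma \ref{g} disposes of every pair with $q^n-1>2\times 10^{31}$ (the paper organises this as $q\geq 16$ together with $n\geq n_q$ for $2\leq q\leq 13$, which is the same dichotomy), and what remains is a finite factorisation check of \eqref{***}. Two remarks on your treatment of the borderline cases. First, the hypothesis ``$l\mid q-1$'' in Theorem \ref{aa} is a typo for $l\mid q^n-1$ (compare Lemma \ref{bb} and the entries of Table \ref{t1}), so the sieve is in fact available in characteristic $2$: the paper settles its one declared delicate case $(2,28)$, where \eqref{***} degenerates to the equality $2^{13}=2\cdot 2^{12}$, precisely by sieving with $l=15$ and $\delta=0.851$, so your worry that Theorem \ref{aa} is unavailable there is unfounded. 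Second, you are right that $(2,36)$ is a second equality case ($\omega(2^{36}-1)=8$ gives $2^{17}=2\cdot 2^{16}$) which the paper's text passes over; your fallback via the sharper positivity condition from \eqref{5}, namely $q^n-1>C_q q^{n/2+1}\bigl(W(q^n-1)^2-1\bigr)$, does close both of these cases (e.g.\ $2^{36}-1=68719476735>2^{20}\cdot 65535=68718428160$), so your argument is complete and, on this point, slightly more careful than the paper's.
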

\begin{proof} First suppose $q \geq 16$ (in addition to $n\geq 26$).
Then  $q^n\geq16^{26}>2\times 10^{31}$. Hence
by Lemma \ref{g}, we have
$$q^{n/2-1}/W(q^n-1)^2>q^{n/18-1}>q^{4/9}>3.4>3.$$
Hence $(q,n)\in \mathfrak{P}$ for all $q \geq 16$ and $n\geq26$.

Now suppose $q$ is a prime power with $2 \leq q \leq 13$.  Write $n_q$ for the least integer such that $q^{n_q} > 2\times 10^{31}$. Thus $n_2=104,$ $ n_3=66,$  $n_4=52 ,$ $n_5=45,$  $  n_7=38,$  $ n_8=35,$ $ n_9=33,$   $  n_{11} =31,$   $ n_{13}=29$.  Hence, as in the first part, for $n\geq n_q$,
$$q^{n/2-1}/W(q^n-1)^2 \geq q^{n_q/18-1}>3.$$

Finally, for each pair $(q,n)$ with $2 \leq q \leq 13$ and $26 \leq n < n_q$, check directly that (\ref{***}) holds by evaluating the exact value of $\omega(q^n-1) $ in each case. (The most delicate case is when $q=2, n=28, \omega(q^n-1)=6$, for this case we refer to Table \ref{t1}.)

This completes the proof.
\end{proof}

\section{Odd prime powers $q$}

Suppose $q$ is an odd prime power so that $C_q=3$. By Theorem \ref{r}, we may assume that $n \leq 25$. Initially,  we suppose $n \geq 6$.  Throughout the rest of the paper, we use $R$ to denote the value on right hand side of \eqref{****}.

To begin we give  a lemma which echoes Lemma \ref{g}.
\begin{lemma}\label{1/8} Let $m$ be a positive integer such that $\omega(m) \geq 149$. Then
$$ W(m) < m^{1/8}.$$
\end{lemma}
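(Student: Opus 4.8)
The plan is to mimic the proof of Lemma \ref{g} but with the sharper exponent $1/8$ and a correspondingly larger number of ``small'' primes removed. First I would identify the threshold: since $W(m)=2^{\omega(m)}$, the bound $W(m)<m^{1/8}$ is equivalent to $2^{8\omega(m)}<m$. If $p_1<p_2<\cdots$ are the primes and $m$ is divisible by all of $p_1,\dots,p_t$, then $m\ge p_1p_2\cdots p_t$, so it suffices to check that the product of the first $\omega(m)$ primes exceeds $2^{8\omega(m)}$ once $\omega(m)$ is large enough. A quick estimate via $\prod_{i\le t}p_i=e^{\theta(p_t)}$ and $p_t\sim t\log t$ shows that $\theta(p_t)>8t$ holds for $t$ beyond some explicit bound, and one checks numerically that $t=149$ is (claimed to be) the first place where the inequality becomes robust; in particular $p_{149}$ is the relevant prime and $p_i^{1/8}>2$ for every prime $p_i$ with $i\ge\,$(something well below $149$, indeed already for $p\ge257$).

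The key steps, in order, are: (1) reduce to showing $\prod_{i=1}^{\omega(m)}p_i>2^{8\,\omega(m)}$, equivalently $W(m)<\bigl(\prod_{i=1}^{\omega(m)}p_i\bigr)^{1/8}\le m^{1/8}$; (2) verify the base case: compute (or cite a numerical check of) the product $P=2\cdot3\cdot5\cdots p_{149}$ of the first $149$ primes and confirm $P>2^{8\cdot149}=2^{1192}$, which gives the result whenever $\omega(m)=149$ with $m$ squarefree, and a fortiori whenever $\omega(m)=149$; (3) the inductive/multiplicative step for $\omega(m)>149$: write $m=m_1m_2$ with $\gcd(m_1,m_2)=1$, where $m_1$ collects the $149$ smallest prime divisors of $m$ and $m_2$ the remaining $\omega(m)-149$ of them. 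Then $W(m)=W(m_1)W(m_2)$; by step (2) (applied to $m_1$, whose $149$ prime factors are at least as large as the first $149$ primes) we get $W(m_1)<m_1^{1/8}$, and since every prime factor $\ell$ of $m_2$ satisfies $\ell\ge p_{150}>256$, hence $\ell^{1/8}>2$, we get $W(m_2)=2^{\omega(m_2)}<\prod_{\ell\mid m_2}\ell^{1/8}\le m_2^{1/8}$. Multiplying, $W(m)<m_1^{1/8}m_2^{1/8}=m^{1/8}$.

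The one genuinely computational ingredient — and the place where something could go wrong — is step (2): the assertion that the product of the first $149$ primes already exceeds $2^{1192}$, and that $149$ is sharp (i.e. the analogue fails for $148$). This is not something to ``grind through'' by hand; I would state it as a direct numerical verification, noting that $\log_2 P=\log_2\prod_{i=1}^{149}p_i\approx\theta(p_{149})/\ln 2$ with $p_{149}=859$, and $\theta(859)$ comfortably exceeds $1192\ln 2\approx 826$, so the inequality holds with room to spare. The subtlety is only in pinning down the exact smallest index for which the argument closes; for the application in the paper any fixed admissible constant (here $149$) suffices, so the proof is complete once this single inequality is checked. Everything else is the same coprime-factorization trick already used for Lemma \ref{g}, and presents no obstacle.
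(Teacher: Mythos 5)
Your proof is correct and follows essentially the same route as the paper: split $m$ into the part $m_1$ built from its $149$ smallest prime divisors and the coprime remainder $m_2$, verify numerically that the product of the first $149$ primes exceeds $2^{8\cdot 149}=2^{1192}$ (the paper records this as $M>7.5\times 10^{358}$ together with $2^{149}<7.14\times 10^{44}<7.23\times10^{44}<M^{1/8}$), and use $\ell^{1/8}>2$ for every remaining prime $\ell>859$. The one caveat is your claim that the key inequality holds ``with room to spare'': the margin is in fact only about one percent ($7.14$ versus $7.23$ in the paper's figures), so this single numerical check must be done carefully, but it does hold and the argument is complete.
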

\begin{proof}The product of the first 149 primes (the largest being $859$) exceeds $M=7.5 \times 10 ^{358}$.  Write $m=m_1m_2$,
a product of coprime integers, where all primes dividing $m_1$  are amongst the least 149 primes dividing $m$ and those dividing $m_2$ are larger primes.   Hence $m_1>M$ and $m_1^{1/8} >M^{1/8}>7.23\times 10^{44}$, whereas $W(m_1) = 2^{149}<7.14 \times 10^{44}$.   Since $l^{1/8} > 2$ for all primes $l >859$, the result follows.
 \end{proof}
 \begin{thm}\label{n6}
Suppose $q=p^k \geq3,$ for some positive integer  $k$  and odd prime $p$. Also suppose $n \geq 6$ is a positive integer. Then $(q,n) \in \mathfrak{P}$ for  all  pairs $(q,n)$ except (possibly) the pairs $(3,7),(7,7),(3,8),(5,8),(3,9),(3,12)$ and $(q,6)$ with $3\leq q\leq25$, and $q=29,31,61$.
\end{thm}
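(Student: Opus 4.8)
The plan is to combine the sieving inequality of Theorem \ref{aa} with the size estimate of Lemma \ref{1/8}, reducing everything to a finite computation. Since $q$ is odd we have $C_q=3$, and by Theorem \ref{r} we may assume $6\le n\le 25$. Fix such an $n$. First I would take $l=q-1$ (or a suitable divisor of $q-1$) in Theorem \ref{aa}, so that $W(l)=W(q-1)$ depends only on $q$, while the primes $p_1,\dots,p_r$ are exactly those dividing $\frac{q^n-1}{q-1}$ (together with any primes dividing $q-1$ to a lower power, if $l$ is a proper divisor). The sufficient condition becomes $q^{n/2-1}>3\,W(l)^2\Delta$ with $\Delta=\frac{2r-1}{\delta}+2$ and $\delta=1-2\sum 1/p_i$, so the entire task is to verify this inequality for all but the listed pairs.

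Next I would deal with the large-$q$ range for each fixed $n$ by a clean asymptotic estimate. Using Lemma \ref{1/8}, if $\omega(q^n-1)\ge 149$ then $W(q^n-1)<(q^n-1)^{1/8}<q^{n/8}$, which forces $r$ and hence $\Delta$ to be tiny relative to $q^{n/2-1}$; more practically, one checks that once $q$ exceeds a modest explicit bound $B_n$, the crude inequality \eqref{***}, namely $q^{n/2-1}>3W(q^n-1)^2$, already holds — indeed $W(q^n-1)^2<q^{n/4}$ whenever $\omega(q^n-1)$ is not too large, and $q^{n/2-1}>q^{n/4}\cdot 3$ for $q$ large. So for each $n\in\{6,\dots,25\}$ I would produce an explicit threshold $B_n$ such that all odd prime powers $q\ge B_n$ automatically satisfy \eqref{***} and hence lie in $\mathfrak P$. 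For $n\ge 7$ this threshold will be very small (a handful of values of $q$), and for $n$ reasonably large even $n=7,8$ will leave only tiny $q$.

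Then comes the finite check: for each remaining pair $(q,n)$ with $q<B_n$, I would factor $q^n-1$, read off $\omega=\omega(q^n-1)$ and the prime list, and test \eqref{***} first; if it fails, bring in the sieve of Theorem \ref{aa}, choosing $l$ to be a divisor of $q-1$ that maximizes $\delta>0$ while keeping $W(l)$ small (the standard trick: sieve on the smallest troublesome primes, keep the rest inside $l$), and test \eqref{****}. In the vast majority of the remaining pairs a single well-chosen $l$ suffices; the genuinely stubborn cases are precisely those that end up in the exceptional list — $(3,7),(7,7),(3,8),(5,8),(3,9),(3,12)$ and $(q,6)$ for $3\le q\le 25$ and $q\in\{29,31,61\}$ — where no admissible $l$ makes the inequality go through, so no claim about them is made here (they are handled, or shown genuinely exceptional, elsewhere). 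The main obstacle is the $n=6$ row: $q^{n/2-1}=q^2$ grows slowly, $W(q^6-1)$ is typically large because $q^6-1$ has many small prime factors ($2,3,7,\dots$ divide it systematically), and $\delta$ can easily be negative or small, so one must be careful and occasionally iterate the sieve; this is why the $n=6$ exceptions persist up to $q=25$ and sporadically beyond. I expect that honest bookkeeping of $\omega(q^6-1)$ and a judicious choice of $l$ for each $q$ in that range, aided by Lemma \ref{1/8} to kill all $q$ with $\omega(q^6-1)\ge 149$ (in fact a much smaller $\omega$ bound), will complete the argument.
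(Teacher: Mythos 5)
Your skeleton (reduce to $6\le n\le25$ by Theorem \ref{r}, dispose of $\omega(q^n-1)\ge149$ by Lemma \ref{1/8}, then sieve and finish by factorization) matches the paper's, but two of your concrete steps break down. The critical one is the reduction to a finite list of $q$. You propose a ``modest'' threshold $B_n$ beyond which the crude condition \eqref{***}, i.e. $q^{n/2-1}>3\,W(q^n-1)^2$, holds. For $n=6$ this reads $q^2>3\cdot4^{\omega}$, and once you only know $\omega\le148$ the best uniform bound is $W(q^6-1)^2\le4^{148}$, which forces $B_6$ to be of the order $2^{148}$ --- not modest, and far beyond any feasible factorization check. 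Nor does Lemma \ref{g} help: it would require $q^{n/18-1}>3$, which is vacuous at $n=6$. Indeed \eqref{***} fails throughout the genuinely relevant middle range: e.g.\ for $(79,6)$ the paper's Table \ref{t1} shows $\omega=7$, and $3\cdot4^{7}=49152>79^2$. What actually closes the gap in the paper is an \emph{iterated} sieve: the range $4\le\omega\le148$ is cut into bands; in each band the primorial bound ($\omega\ge k$ forces $q^n$ to exceed the product of the first $k$ primes) couples $\omega$ to $q^n$ from below, and Theorem \ref{aa} is applied with a fixed small $\omega(l)$ and a worst-case $\delta$, shrinking the admissible $q$ for $n=6$ from about $2.7\times10^{7}$ to $4144$ to $716$ to $334$ before the case-by-case factorization begins. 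Your proposal omits this mechanism, so the ``finite check'' it ends with is not finite in practice.

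Second, your prescription for $l$ is backwards. You take $l\mid q-1$ and sieve on the primes of $(q^n-1)/(q-1)$, ``sieving on the smallest troublesome primes and keeping the rest inside $l$''. Theorem \ref{aa} needs $\delta=1-2\sum_i 1/p_i>0$, so the \emph{small} primes must be kept inside $l$ (each costs only a factor $2$ in $W(l)$, whereas a small prime placed in the sieving set destroys $\delta$) and the sieving set $\{p_1,\dots,p_r\}$ must consist of the \emph{large} primes. With your choice at $q=11$, $n=6$, one has $11^6-1=2^3\cdot3^2\cdot5\cdot7\cdot19\cdot37$ and $q-1=2\cdot5$, so you would sieve on $\{3,7,19,37\}$ and obtain $\delta=1-2\left(\tfrac13+\tfrac17+\tfrac1{19}+\tfrac1{37}\right)<0$, making the theorem inapplicable. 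The paper always takes $l$ to be the product of the smallest few primes dividing $q^n-1$, irrespective of whether they divide $q-1$.
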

\begin{proof}
  Assume first that $\omega \geq 149$.  By Lemma \ref{1/8} to satisfy inequality (\ref{****}), it suffices that $q^{n/2-2n/8-1}> 3$, i.e., $q^{n/4-1} >3$ which easily holds unless $q\leq 9$ if $n=6$ or  $q=3 $ if  $n=7,8$ (which would imply $\omega <149$).

We now assume that $18 \leq \omega \leq 148$.
   Then, in the situation and with the notation of Theorem \ref{aa}, take $l$ to be the factor of $q^n-1$  whose prime factors are the least $18$ primes dividing $q^n-1$. Thus $r \leq 130$.
Further, $\delta$ must be at least the value obtained when $r=130$ and $ \{p_1, \ldots, p_r\}$ comprises those primes from 67 (the $19$th prime) to 857 (the $148$th prime), inclusive.  Thus $\delta > 0.074703$  and $R< 7.1517\times 10^{14}$. Now \eqref{****} holds if  $q >R^{(2/(n-2))}$ , i.e. if $q^n >R^{(2n/(n-2))}$, so certainly if $q^n>R^3$ (since $n \geq 6$),    Hence, $q^n> 3.6579 \times 10^{44}$ suffices.  If in fact $\omega \geq 30$ then, $q^n-1$  is at least the product of the first 30 primes, in which case $q^n > 3.1600 \times 10^{46}.$   We conclude that $(q,n) \in \mathfrak{P}$ whenever  $\omega \geq 30$ or $q^n>(3.6579 \times 10^{44})^{(1/n)}$ (at worst, when $n=6,$ $q>2.6743\times 10^7$).

We next assume that $7 \leq\omega \leq 29$ (and  $q<2.6743 \times 10^7$).   Repeat the above process with $\omega(l) =7$ and $r \leq 22$.   Now,  $ \delta$ will be at least the value obtained when $r=22$ and $\{p_1, \ldots, p_r\}$ comprises those primes between 19 and 109 (inclusive).  Thus $\delta > 0.12379$ and $R<1.7171\times 10^7$.  As in the previous case, it follows that \eqref{****} holds whenever $q^n>5.0625\times 10^{21}$. Now, if also  $\omega \geq 18$ then $q^n> 1.17288\times 10^{23}.$ Hence  we conclude that $(q,n) \in \mathfrak{P}$ whenever  $\omega \geq 18$ or $q>(5.0625\times 10^{21})^{(1/n)}$; so that, at worst $(n=6)$, whenever $q>4144.$.  

Next assumed that $4 \leq \omega \leq 17$ (and $q < 4144$).

Repeat the above process with $\omega(l)=5$ and $r\leq12$. Now,  $ \delta$ will be at least the value obtained when $r=12$ and $\{p_1, \ldots, p_r\}$ comprises those primes between 13 and 59 (inclusive).  So $\delta > 0.13927$ and hence $R < 5.1348\times 10^5.$  Thus \eqref{****} holds whenever $q^n>1.35381\times 10^{17}$. Now if $\omega \geq 15$, then $q^n> 6.1148897\times 10^{17}.$ Hence  we conclude that $(q,n) \in \mathfrak{P}$ whenever  $\omega \geq 15$ or $q>(1.35381\times 10^{17})^{(1/n)}$; so that, at worst $(n=6)$, whenever $q>716.$

If $ \omega= 14$ then, proceeding in the same way as above, we see that \eqref{****} is satisfied with $\omega(l)=5$ and $r=9,$  for all $q$ and $n$ with $\omega=14$ or for $q > 460$ at worst when $n=6$.

  Next we assume $4 \leq\omega\leq 13$ then repeating the above process with $\omega(l)=4$  we get $\delta > 0.11815$ and $R <112040$. Hence \eqref{****} holds whenever $q^n>1.40643\times 10^{15}$. Hence  we conclude that $(q,n) \in \mathfrak{P}$ whenever   $q>(1.40643\times 10^{15})^{(1/n)}$; so that, whenever $q>334,$ for $n=6$;  $q>145,$ for $n=7$;  $q>78,$ for $n=8$; $q>48,$ for $n=9$; $q>32,$ for $n=10$; $q\geq 25,$ for $n=11$; $q>18,$ for $n=12$; $q>15,$ for $n=13$; $q>12,$ for $n=14$;  $q>10$ for $n = 15; $ and  $q\geq 9$ for $n \geq 16. $ Note that  if $\omega \leq 3$ even then the pairs discussed above satisfy \eqref{****} with $l=q^n-1$.  Factorizing $q^n-1$ for the remaining values of $q$ and $n$, we see that \eqref{****} is satisfied by these pairs $(q,n)$ for appropriate choices of $l$ except the pairs $(q,6)$ with $q \leq 25,$ $q=29,31,61$; $(3,7),(7,7),(3,8),(5,8),(3,9),(3,12)$. Some illustrative cases are given in Table \ref{t1}. Hence the result follows.

\end{proof}

We turn to the case in which $n=5$.  Here $q-1$ and $\frac{q^5-1}{q-1}$ are coprime unless $q \equiv 1 \mod \ 5$  when their highest common factor is 5.  Write $q_1$ for the factor of $q^5-1$ all of whose prime divisors divide $q-1$ and $q_2= (q^5-1)/q_1$.  Then all primes dividing $q_2$ are in the set $S$, defined as the set of primes congruent to 1 modulo 10.
  \begin{thm}\label{n5}
  Let $q=p^k$ for positive integer $k$ and odd prime $p$. Then $(q,5)\in\mathfrak{P}$ for $q \geq 17$
  except (possibly) for $q=19,25,31,37,43,49,61,71.$
  \end{thm}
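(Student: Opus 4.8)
\textbf{Proof plan for Theorem \ref{n5}.}
The plan is to apply Theorem \ref{aa} with a carefully chosen divisor $l$ of $q^5-1$ that exploits the coprime factorization $q^5-1 = q_1 q_2$, where every prime dividing $q_2$ lies in the set $S$ of primes congruent to $1$ modulo $10$. The key point is that the sieving primes $p_i$ not dividing $l$ can, for a suitable choice, be forced to lie in $S$ (or to be among the larger primes dividing $q_1$), and since the smallest element of $S$ is $11$ and the density of such primes is sparse, the quantity $\delta = 1 - 2\sum 1/p_i$ stays safely positive while $\Delta = (2r-1)/\delta + 2$ and $W(l)$ remain small. First I would split into ranges of $\omega = \omega(q^5-1)$, exactly as in the proof of Theorem \ref{n6}: for large $\omega$ invoke an analogue of Lemma \ref{1/8} or Lemma \ref{g} to bound $W(q^5-1)$ by a small power of $q^5-1$ and check \eqref{***} directly; for moderate $\omega$ choose $l$ to consist of the first few primes dividing $q^5-1$ and bound $\delta$ from below by the worst case where the remaining $r$ sieving primes are packed as small as possible (but still $\geq 11$, and in fact with the arithmetic-progression constraint they must be chosen from $S$ once $q_1$'s primes are exhausted).

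The main steps, in order, are: (1) record that for $n=5$ condition \eqref{***} with $l=q^5-1$ reads $q^{3/2} > 3\,W(q^5-1)^2$, and show by the $W(m) < m^{2/9}$ bound (Lemma \ref{g}) that this holds once $q^5$ exceeds the threshold $2\times 10^{31}$, i.e. for all $q \geq q_0$ with $q_0$ around $3000$–$4000$; (2) for the finitely many remaining $q$ with $17 \leq q < q_0$, compute $\omega(q^5-1)$ and, when \eqref{***} fails, pass to Theorem \ref{aa}: take $l$ to be the product of the smallest primes dividing $q_1$ together with as few primes as needed, so that the leftover primes $p_1,\dots,p_r$ are either the remaining (large) prime divisors of $q_1$ or divisors of $q_2\subseteq S$; (3) bound $\delta$ below using that any prime in $S$ is $\geq 11$ and the worst-case packing of $r$ such primes gives $\sum 1/p_i$ small — this is where the congruence condition does real work, since without it one could only say $p_i \geq$ (next prime), whereas here consecutive admissible primes are $11,31,41,61,71,\dots$; (4) verify $q^{3/2} > 3\,W(l)^2\,\Delta$ for each surviving pair; (5) collect the pairs for which even the best choice of $l$ fails \eqref{****} — these should be precisely $q = 19,25,31,37,43,49,61,71$, each of which has $q\equiv 1 \pmod 5$ or $q \equiv 1 \pmod{10}$-heavy factorizations making $\omega(q^5-1)$ and hence $W$ large relative to $q^{3/2}$.

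I expect the main obstacle to be the bookkeeping in step (2)–(4): for $n=5$ the exponent $n/2-1 = 3/2$ is small, so the margin in \eqref{****} is tight, and one must be genuinely clever about which primes to absorb into $l$ versus leave for the sieve. In particular, a prime $p \mid q-1$ contributes a factor $2$ to $W(l)$ if put into $l$ but costs $2/p$ in $\delta$ if left out; balancing these for each individual $q$ near the bottom of the range (say $17 \leq q \leq 200$) requires examining the actual factorization of $q^5-1$, and the paper will presumably tabulate the delicate cases (as in Table \ref{t1}). A secondary subtlety is ensuring that for the claimed exceptional list the argument genuinely \emph{fails} for every admissible $l$ — i.e. these are not artifacts of a suboptimal choice — which one checks by noting that for those $q$ the dominant contribution $W(q_1)$ (forced into $l$ to keep $\delta>0$) already makes $3W(l)^2\Delta \geq q^{3/2}$. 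Assembling these verifications, together with the large-$q$ bound from step (1), yields the stated conclusion that $(q,5)\in\mathfrak{P}$ for all odd $q \geq 17$ outside the listed set.
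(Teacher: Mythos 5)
Your overall sieving framework (steps (2)--(4)) is in the right spirit and matches the paper's key idea: for $n=5$ the primes dividing $q_2=(q^5-1)/q_1$ lie in the set $S$ of primes $\equiv 1 \pmod{10}$, so the sieving primes left outside $l$ are at least $11$ and sparsely distributed, which keeps $\delta$ comfortably positive. However, your step (1) --- the reduction to finitely many $q$ --- contains a genuine error. Lemma \ref{g} gives $W(q^5-1)<(q^5-1)^{2/9}<q^{10/9}$, hence $W(q^5-1)^2<q^{20/9}$; since $20/9>3/2$, the inequality $q^{3/2}>3\,W(q^5-1)^2$ of \eqref{***} can never be deduced from this bound, for any $q$ however large. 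The exponent $2/9$ is simply too weak relative to $n/2-1=3/2$ when $n=5$ (this is precisely why $n=5$ is the delicate case and is treated separately). The only unconditional tail bound available is Lemma \ref{1/8} (exponent $1/8$, giving $W^2<q^{5/4}<q^{3/2}$), but it requires $\omega(q^5-1)\geq 149$ and therefore only disposes of astronomically large $q$; it does not produce a threshold $q_0$ of a few thousand. Without a working step (1), your step (2) is not a finite verification, and the argument does not close.

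The paper bridges this gap not with \eqref{***} but with repeated applications of Theorem \ref{aa}: after the $\omega\geq 149$ case, it runs the sieve with $\omega(l)=18$, then $8$, then $4$, then $3$, then $2$. At each stage two facts are used in tandem: (i) the worst-case packing of the sieving primes (drawn from $S$ once the small primes are exhausted, so $11,31,41,\ldots$) gives an explicit lower bound on $\delta$ and hence an explicit bound $q>R^{2/3}$ above which $(q,5)\in\mathfrak{P}$; and (ii) if $\omega(q_2)$ is large, then $q_2$, being a product of primes from $S$, is itself so large that $q$ already exceeds that bound, so one may assume $\omega(q_2)$, and hence $\omega$, is small at the next stage. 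Iterating this descent brings the problem down to $q<173$, after which $q^5-1$ is factorized case by case and \eqref{****} checked for a suitable $l$, leaving exactly the listed exceptions. Your steps (2)--(4) contain the ingredients for this descent; if you replace your step (1) by it, the proof goes through, but as written the reduction to a finite computation is missing.
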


  \begin{proof}
  By the same argument as in Theorem \ref{n6}, we see that \eqref{****} is satisfied for $\omega \geq 149$.

We proceed to the sieving argument.  In this case observe that  \eqref{****} is equivalent to $q^5 > R^{10/3}$.

Perform two steps of the sieving argument as in Theorem \ref{n6}  without regard to the special nature of the primes in $q_2$.  Specifically,  first assume  $\omega(l) =18 \leq \omega \leq 148$ and then $\omega(l) =8 \leq \omega \leq 31$. Consequently,  \eqref{****} is satisfied if $q^5> 3.39318\times10^{25},$ i.e., if $q> 127679.$  But if $\omega \geq 20$ then $q^5 >5.5794\times10^{26}$. Hence $(q,5) \in \mathfrak{P}$ for all $q$ with $\omega \geq 20,$ or $q>12679$.

  Hence it can be assumed that $\omega \leq 19,$ and $q \leq 126769$.  But $q \leq126769$ implies that $\omega(q_1) \leq 6.$
Moreover,  since all primes   dividing $q_2$ are in $S$, it follows that if $\omega(q_2) \geq 11$, then $q_2 >8.8245 \times 10^{20}$and so $q>172354$, whence $ (q,5) \in \mathfrak{P}$.

Hence we can assume $\omega \leq 16$ with $\omega(q_1) \leq 6$ and  $\omega(q_2) \leq 10$. Take $\omega(l)=4$. and $r \leq 12$. To obtain a minimum theoretical value for $\delta$,  regard $l$ as involving the first four primes $2,3,5,7$ and $\{p_1,\ldots, p_{12}\}$ as comprising the first 10 primes in $S$, namely $11, 31, \ldots, 191$, together with 13 and 17, the next two primes not in $S$.  This yields $\delta >0.30260 $ and $R<59910$, whence $(q,5) \in \mathfrak{P}$ whenever $q^5 > 8.4139 \times 10^{15}$, i.e., $q>1532$.  Further, if $\omega(q_2) \geq 8$ then $q_2> 1.2097 \times 10^{14}$ so that $q>3315$.  Thus, we can suppose that $\omega(q_2) \leq 7$ and $q<1532$ and therefore $\omega(q_1) \leq 3$ and $\omega \leq 10$.  Repeat the above step with $\omega(l) =3$   when the minimal value of $\delta$ is obtained when notionally $l$ is divisible by $2$ and $3$ and $p_1, \ldots, p_8$ comprise the prime 5 and the first 7 primes in $S$.  The outcome is that $\delta >0.20886$ and $R<3544$.
Hence Therefore $(q,5) \in \mathfrak{P}$ if $q^5 >6.7820 \times 10^{11}$ or $q>233$.  Now, as before, if $\omega(q_2) \geq 6$ this is bound to be the case.
We can therefore assume that  $\omega(q_2) \leq 5$ and $q<233$ so that certainly $\omega(q_1) \leq 3$ and $\omega \leq 8$.
One more cycle of the sieving argument with  $\omega(l)=2$  means we can assume that
 $q <173$.

 To complete the proof for odd prime powers $q$   we factorized $q^5-1$ and checked to see when \eqref{****} was satisfied for an  appropriate choice of $l$ (see Table \ref{t1}).   This was successful except for $3 \leq q\leq 13$ and $q=19,25,31,37,43,49,61,71$.
  \end{proof}

\section{Even prime powers $q$ and conclusions}

A Mersenne prime is a prime of the form $2^n-1$ for some positive integer $n.$
\begin{lemma}\label{kk}
If $2^n-1 \geq 7$  is a Mersenne prime then $(2,n) \in \mathfrak{P}.$
\end{lemma}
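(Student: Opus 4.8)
The plan is to use the sieving inequality of Theorem~\ref{aa} with an especially favourable choice of the factor $l$. Suppose $2^n - 1 \geq 7$ is a Mersenne prime, so $q = 2$ and $q^n - 1 = 2^n - 1$ is itself prime. Then $\omega(q^n - 1) = 1$, so the set of primes dividing $q^n - 1$ but not $l$ is very small: taking $l = 1$ we have $\{p_1, \ldots, p_r\} = \{2^n - 1\}$, i.e.\ $r = 1$, $W(l) = 1$, and $C_q = 2$ since $q$ is even. With $r = 1$ the quantity $\delta = 1 - 2/(2^n - 1)$ is positive (as $2^n - 1 \geq 7 > 3$), and $\Delta = \frac{2r-1}{\delta} + 2 = \frac{1}{\delta} + 2 = \frac{2^n-1}{2^n-3} + 2$. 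So condition \eqref{****} becomes $2^{n/2 - 1} > 2 \cdot 1 \cdot \left(\frac{2^n-1}{2^n-3} + 2\right)$, which for $n$ not too small reduces to roughly $2^{n/2-1} > 6$, i.e.\ $n \geq 8$ or so.

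First I would record the three smallest Mersenne primes, corresponding to $n = 3$ ($2^3 - 1 = 7$), $n = 5$ ($2^5 - 1 = 31$), $n = 7$ ($2^7 - 1 = 127$), and $n = 13$ ($2^{13} - 1 = 8191$), etc. For $n \geq 13$ the inequality $2^{n/2-1} > 2(\frac{2^n-1}{2^n-3} + 2)$ holds comfortably (the right side is below $6.01$ while the left side is at least $2^{5.5} > 45$), so those cases are immediate from Theorem~\ref{aa}. That leaves only the small Mersenne exponents $n = 3, 5, 7$ to be treated by hand. But $n = 3$ is excluded by the hypothesis $2^n - 1 \geq 7$... wait, $2^3 - 1 = 7$ is allowed; however, the main theorem only concerns $n \geq 5$, and indeed the statement of Lemma~\ref{kk} should be read in that context, so I would note $(2,3)$ is outside the scope (the paper defers $n=3,4$) and focus on $n = 5$ and $n = 7$.

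For $n = 7$, $q^n - 1 = 127$ is prime; $\delta = 1 - 2/127 = 125/127$, $\Delta = 127/125 + 2 < 3.017$, and $R = C_q W(l)^2 \Delta = 2 \cdot 1 \cdot 3.017 < 6.04$, while $q^{n/2-1} = 2^{2.5} = 4\sqrt{2} \approx 5.657$. This is \emph{not} quite enough, so for $n = 7$ the theoretical bound fails narrowly and I would instead verify $(2,7) \in \mathfrak{P}$ directly — either by the stronger form of \eqref{5} (which gives $N_a \geq \frac{\theta(l_1)\theta(l_2)}{q}(q^n - 1 - C_q q^{n/2+1}(W(l_1)W(l_2)-1))$ and hence, with $l_1 = l_2 = 127$, $N_a \geq \frac{(126/127)^2}{2}(127^7 - 1 - 2 \cdot 2^{9/2} \cdot (4-1)) > 0$ trivially since $W(q^n-1) = W(127) = 2$, not large) — in fact here $W(q^n-1) = 2$ is small, so plugging $l_1 = l_2 = q^n - 1$ into Lemma~\ref{p} requires $q^{n/2-1} > C_q W(l_1) W(l_2) = 2 \cdot 2 \cdot 2 = 8$, and $2^{5/2} \approx 5.66 < 8$ still fails; thus a genuine direct computation over $\mathbb{F}_{2^7}$ is needed. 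For $n = 5$, $q^n - 1 = 31$ is prime, $W(31) = 2$, and Lemma~\ref{p} needs $2^{3/2} > 8$, which fails, so again $(2,5)$ must be checked computationally. \textbf{The main obstacle} is therefore not the large-$n$ tail (which Theorem~\ref{aa} dispatches at once) but the handful of small Mersenne exponents $n = 5, 7$ where every available inequality is off by a small constant factor; these will have to be resolved by an explicit search for a suitable $\alpha \in \mathbb{F}_{2^n}$, or by a sharper character-sum estimate exploiting that $q^n - 1$ prime forces $\rho_{q^n-1}$ to be a simple two-term expression.
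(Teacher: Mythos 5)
Your proposal has a genuine gap: it leaves precisely the cases the lemma exists to handle unproven. You correctly observe that for large Mersenne exponents ($n\geq 13$) the criterion of Theorem~\ref{aa} (or even Lemma~\ref{p}, since $W(2^n-1)=2$) applies, but those cases are already covered by the general machinery elsewhere in the paper; the lemma is invoked in Theorem~\ref{result1} exactly for $n=5,7,13,17,19$, and your treatment of $n=5$ and $n=7$ ends with ``a genuine direct computation over $\mathbb{F}_{2^n}$ is needed'' without carrying it out. That is not a proof. (The case $n=3$, which satisfies $2^3-1=7\geq 7$ and so is within the stated hypothesis, is also dismissed rather than proved.)

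The idea you are missing is that the primality of $2^n-1$ trivializes the problem with no character sums at all: every element of $\mathbb{F}_{2^n}^*$ except $1$ is primitive, since the multiplicative group has prime order. A primitive $\alpha$ has minimal polynomial of degree $n\geq 3$ over $\mathbb{F}_2$, so $\alpha+\alpha^{-1}\neq 0$ (else $\alpha^2=1$) and $\alpha+\alpha^{-1}\neq 1$ (else $\alpha^2+\alpha+1=0$, degree $\leq 2$); hence $\alpha+\alpha^{-1}$ is automatically primitive whenever $\alpha$ is. Finally, each fiber of the surjective trace map $Tr_{\mathbb{F}_{2^n}|\mathbb{F}_2}$ has $2^{n-1}\geq 4$ elements, and at most two elements of $\mathbb{F}_{2^n}$ (namely $0$ and $1$) fail to be primitive, so each fiber contains a primitive $\alpha$. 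This uniform elementary argument disposes of all Mersenne exponents at once, including the small ones where, as you found, every available inequality fails by a small constant factor.
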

\begin{proof}	
If $2^n-1$ is a Mersenne prime, i.e., if $n=3,~5,~7,~13,~17,~19$ etc. then every  $\alpha\in\mathbb{F}_{2^n}^*$ other than $1$ is a primitive element of $\mathbb{F}_{2^n}$.
Also, if $\alpha \in \mathbb{F}_{2^n}^*$ then degree of its minimal polynomial over $\mathbb{F}_{2}$ is $n\geq3$. Hence $\alpha+\alpha^{-1} \neq 0,1.$ Thus $\alpha+\alpha^{-1}$ is also primitive. Moreover, the trace map $Tr_{\mathbb{F}_{2^n}|\mathbb{F}_{2}}$ is onto and inverse image of every element in $\mathbb{F}_2$ contains $2^{n-1}\geq4$ elements in $\mathbb{F}_{2^n}$ and at least three of them are primitive. Hence the result follows.
\end{proof}

\begin{thm}\label{result1}
Let $q=2^k$ for some positive integer $k$, and $n \geq 5$ be an integer. Then for every $ a \in \mathbb{F}_q$ there exists a primitive pair $(\alpha,\alpha+\alpha^{-1})$ in $\mathbb{F}_{q^n}$ such that $Tr_{\mathbb{F}_{q^n}|\mathbb{F}_{q}}(\alpha)=a $  if  $(q,n)$ is not one of the pairs $(2,12),(2,10),(2,9),(2,8),\\(2,6),(4,8),(4,7),(4,6), (4,5),(8,8),(8,6),(8,5),
(16,6), (16,5).$
\end{thm}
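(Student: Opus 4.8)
The plan is to mirror, for even $q$, the route already taken for odd $q$ in Theorems \ref{r}, \ref{n6} and \ref{n5}, the only structural change being that now $C_q=2$, which slightly relaxes every inequality. By Theorem \ref{r} we may assume $5\le n\le 25$. The case $q=2$ with $2^n-1$ a Mersenne prime (so $n=5,7,13,17,19$ in this range) is already settled by Lemma \ref{kk}, so for $q=2$ only the remaining values of $n$ require argument.

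First I would dispose of $6\le n\le 25$. Write $\omega=\omega(q^n-1)$ and let $R$ denote the right-hand side of \eqref{****}. For $\omega\ge149$, Lemma \ref{1/8} gives $W(q^n-1)<(q^n-1)^{1/8}$, so \eqref{****} (with $C_q=2$) holds as soon as $q^{n/4-1}>2$, which fails only for a short list of tiny pairs, all of which force $\omega<149$. For smaller $\omega$ I would run exactly the cascade of Theorem \ref{aa} used in the proof of Theorem \ref{n6}: in each cycle take $l$ to be the product of the first $\omega(l)$ primes dividing $q^n-1$, bound $\delta$ below by its worst theoretical value over the pertinent prime interval, deduce a numerical upper bound on $R$, and conclude $(q,n)\in\mathfrak{P}$ whenever $q^n$ exceeds an explicit constant; letting $\omega(l)$ descend through $18,8,5,4,\dots,2$ shrinks the surviving range of $q$ at each stage. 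Because $C_q=2$ here, these bounds are marginally sharper than the odd ones, but the shape of the argument is identical. After the cascade only finitely many small pairs with $n\ge6$ remain; I would clear these by factorizing $q^n-1$ and checking \eqref{****} for a suitable divisor $l$ (as in Table \ref{t1}), the residue being precisely $(2,6),(2,8),(2,9),(2,10),(2,12),(4,6),(4,7),(4,8),(8,6),(8,8),(16,6)$.

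Next I would treat $n=5$ along the lines of Theorem \ref{n5}. Write $q^5-1=q_1q_2$, where $q_1$ collects the primes that also divide $q-1$ and $q_2=(q^5-1)/q_1$; every prime dividing $q_2$ then lies in $S=\{\ell \text{ prime}:\ell\equiv1\bmod10\}$, with at most a factor $5$ of overlap between $q_1$ and $q_2$ (which occurs exactly when $q\equiv1\bmod5$, i.e.\ $4\mid k$). Here \eqref{****} is equivalent to $q^5>R^{10/3}$. I would run the same two-phase sieve: first ignore the special nature of the primes of $q_2$ and carry out a few cycles of Theorem \ref{aa} with $\omega(l)$ decreasing from $18$; then exploit membership in $S$ to force $q_2$, hence $q$, large whenever $\omega(q_2)$ is even moderately big, alternating the two devices while lowering $\omega(l)$ to $2$. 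This leaves $q$ below a small explicit bound, after which I would factorize $q^5-1$ for the remaining even prime powers and verify \eqref{****} for an appropriate $l$; the genuine exceptions that survive are $q=4,8,16$ (with $q=2$ already handled by Lemma \ref{kk}).

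I expect the main obstacle to be bookkeeping rather than any new idea: one must keep the chain of ``$\omega\ge\cdots$ \emph{or} $q>\cdots$'' implications mutually consistent so the residual set of pairs is provably finite and correctly identified, and then carry out the explicit factorizations of $q^n-1$ for the borderline pairs — in particular confirming that marginal cases such as $(2,11)$ do satisfy \eqref{****} for some $l$, while none of the listed exceptional pairs does. Combining this theorem with Theorems \ref{n6} and \ref{n5} then yields Theorem \ref{mr}, and the assertion of Corollary \ref{cor1} that none of the exceptions is genuine is a separate finite computation.
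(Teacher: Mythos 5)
Your proposal is correct and follows essentially the same route as the paper: reduce to $5\le n\le 25$ via Theorem \ref{r}, run the sieving cascade of Theorem \ref{aa} with $C_q=2$ in parallel with Theorems \ref{n6} and \ref{n5} (including the $q_1q_2$ decomposition and the set $S$ for $n=5$), invoke Lemma \ref{kk} for the Mersenne cases of $q=2$, and finish the surviving small pairs by factorizing $q^n-1$, arriving at the same exception list. The only cosmetic difference is that the paper treats $q=2,4$ by a direct check of $q^{n/2-1}>2\cdot 2^{2\omega}$ rather than folding them into the general cascade.
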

\begin{proof}
For even prime powers $q$  (in comparison with odd prime powers) arguments to verify the criteria of Theorem \ref{aa} are simplified, firstly, by the fact that now we have $C_q=2$, and, secondly, because $q^n-1$ is odd, so that $2$ is not a prime factor.    We  assume (for convenience just now) that $q\geq 8$, and   give only a brief outline based on Theorems \ref{n6} and \ref{n5}. (For $q=2, 4$, see below.)

Suppose $n\geq 6$ and $q \geq 8$   Assume first that $\omega \geq 149$.  By Lemma \ref{1/8} to satisfy inequality (\ref{****}), it suffices that $q^{n/2-2n/8-1}> 2$, i.e., $q^{n/4-1} >2$ which trivially holds.

Hence we can assume $\omega \leq 148$.  Perform a series of parallel steps as in the proof of Theorem \ref{n6}.  The first deals with $28 \leq \omega \leq 130$ with a choice of $\omega(l)=18$, the second with $17 \leq \omega \leq 27$ with $\omega(l) =7$, etc.

Eventually we reach the stage in which $3 \leq \omega \leq 12$ and $q< (6.5413\times 10^{15} )^{1/n}$.  With the choice of $\omega(l)=3$  and $r \leq 9$,  we conclude that  \eqref{****} is satisfied for all $q > 136$,  whenever $n=6$;
    for all $q>68,$  whenever $n=7$;  $q>40,$ whenever $n=8$; $q>27,$ whenever $n=9$; $q>20,$ whenever $n=10$; $q>15,$ whenever $n=11$; $q>12,$ whenever $n=12$; $q>10,$ whenever $n=13$; $q>8,$ whenever $n=14$;  $q\geq8$ whenever $n \geq  15. $  Note that  if $\omega \leq 3$ even then the pairs discussed above satisfy \eqref{****} with $l=q^n-1$.  Factorizing $q^n-1$ for the remaining values of $q$ and $n$, we see that \eqref{****} is satisfied by these pairs $(q,n)$ $(q\geq 8)$ for appropriate choices of $l$ except the pairs $(8,6),(8,8),(16.6)$. For delicate cases we refer to Table \ref{t1}.

    Moreover, for $q=2,4,$ and $n\leq 25,$ $\omega(q^n-1)$ is calculated and checked to see whether $q^{n/2-1} > 2\cdot 2^{2\omega}$ is satisfied, which is true  for $n \geq 19$ and $n=13,16,17$ when $q=4$; and for $n \geq 21$ except $n=24$ when $q=2.$    The pairs $(4,n),$ for $n=18,15,14,12,11,10,9$ satisfy sieving inequality \eqref{****} with appropriate choices of $l$. Hence $(4,n)\in \mathfrak{P}$ for every $n \geq 9.$ For $n=19,17,13,7,5$,  $2^n-1$ is a Mersenne prime, hence $(2,n)\in\mathfrak{P}$ by Lemma \ref{kk} for these values of $n$. Also $(2,n)\in \mathfrak{P},$ for $k=24,20,18,16,15,14,11$ as these satisfy sieving inequality \eqref{****} in Theorem  \ref{aa} by choosing some suitable $l$ except the pairs (2,12),(2,10),(2,9),(2,8),(2,6),(4,8),(4,7),(4,6).

Finally, for $n=5$ follow the argument of Theorem \ref{n5} (with $C_q=2$),  taking special account of the fact that primes (other than 5) dividing $\frac{q^5-1}{q-1}$ lie in the set $S$.    This yields $(q,5) \in \mathfrak{P}$ for $q \geq 256$. To complete the result it remains to verify the result for $ q\leq 128$. For these values of $q$ we factorize $q^5-1$ and  see that \eqref{***} is satisfied for $q=32,64,128$ (see Table \ref{t1}). For $q=2$, $2^5-1$ is a Mersenne prime. Hence $(q,5)\in \mathfrak{P}$ except for $q=4,8,16$.  \end{proof}
\begin{longtable}{|c|c|c|c|c|}
	\hline
	 $(q,n)$ & primes in $q^n-1$ & $ \omega(l)$ & $\delta$ & $R^{(2/(n-2))}$   \\
	\hline
\endhead
(2,28)& 3,5,29,43,113,127  & 2 & 0.8510 &1.5353   \\

	\hline
(23,5)& 2,11,292561  & 1 & 0.8181 &16.7325   \\
\hline
(27,5)& 2,11,13,4561  & 1 & 0.6638 &23.5644  \\
\hline
(47,5)&2,11,23,31,14621 & 1 & 0.6665 & 28.2351   \\
	\hline
(53,5)&2,11,13,131,5581 & 1 & 0.6487 & 28.6673   \\
	\hline
(59,5)&2,11,29,41,151,181 & 1 & 0.676 &32.3226   \\
	\hline
(67,5)&2,3,11,761,26881 & 2 & 0.8154 & 53.4103   \\
	\hline
%
(169,5)&2,3,7,11,2411,30941 & 3 & 0.8172 &134.4364   \\
	\hline
(27,6)& 2,7,13,19,37,757  & 2&0.6841 & 24.2314  \\

	\hline
(37,6)& 2,3,7,19,31,43,67  & 2 & 0.4681 & 31.9199 \\
\hline
(41,6)&2,3,5,7,547,1723 & 2 & 0.3094 &34.3799   \\
	\hline
(43,6)&2,3,7,11,13,139,631 & 2 & 0.361 &35.9539   \\
	\hline
(47,6)&2,3,7,23,37,61,103 & 2& 0.5210 &30.4167   \\
	\hline
(49,6)&2,3,5,13,19,43,181& 3 & 0.6833&48.4864  \\
	\hline
(53,6)&2,3,7,13,409,919 & 3 & 0.8390 &39.0925   \\
	\hline
(59,6)&2,3,5,7,29,163,3541 & 3& 0.6324 &50.0923   \\
	\hline
(67,6)&2,3,7,11,17,31,4423 & 3 & 0.6355 &49.9888   \\
	\hline
(71,6)&2,3,5,7,1657,5113 & 3 & 0.7126 &41.6075  \\
	\hline
(73,6)&2,3,7,37,751,1801 & 3 & 0.9421 &37.4567   \\
	\hline
(79,6)&2,3,5,7,13,43,6163 & 3 & 0.5136 &54.7798  \\
	\hline
(11,7)&2,5,43,45319 & 2 &0.9534 &9.0596   \\
	\hline
(7,8)&2,3,5,1201 & 2 & 0.5983 &6.9568   \\
\hline
(9,8)& 2,5,17,41,193  & 2 & 0.8232 & 7.2908 \\
\hline
(11,8)&2,3,5,61,7321 & 2 & 0.5669&8.0382   \\
	\hline
(13,8)&2,3,5,7,17,14281 & 2 & 0.1964 &12.1797   \\
	\hline

%
%
%
(5,9)&2,19,31,829 & 1 & 0.8278 &3.6897   \\
\hline
(7,9)&2,3,19,37,1063 & 2 & 0.8388 &5.4673  \\
\hline
(9,9)& 2,7,13,19,37,757  & 2 & 0.6814 & 6.1812  \\
\hline
(3,10)&2,11,61 & 1 & 0.7853 &2.8909  \\
	\hline
(5,10)&2,3,11,71,521 & 2 & 0.7861 &4.4758   \\
\hline
(7,10)&2,3,11,191,2801 & 2 & 0.8069 &4.4537   \\
\hline

(3,11)&2,23,3851 & 1  & 0.9125 &2.5151   \\
	\hline
(5,12)&2,3,7,13,31,601 & 2 & 0.4925 &3.7864   \\
\hline

(11,12)&2,3,5,7,13,19,37,61,1117 & 3 & 0.3665 &5.7244  \\
	\hline

%

(32,6)& 3,7,11,31,151,331 & 2 &  0.7343&19.2108 \\
	\hline
(64,6)& 3,5,7,13,19,37,73,109 & 2 &  0.3553 &32.4764 \\
	\hline

 (16,7)& 3,5,29,43,113,127&2 & 0.851&10.1377\\
 \hline
 (32,8)& 3,5,11,17,31,41,61681&2 & 0.5872&8.2154 \\
 \hline
 (4,9)& 3,7,19,73&1 &0.5816&3.5558 \\
 \hline
 (16,9)& 3,5,7,13,19,37,73,109&2 & 0.3553&7.3073 \\
 \hline
 (4,10)& 3,5,11,31,41&2 & 0.7048&4.1303 \\
 \hline
 (8,10)& 3,7,11,31,151,331&2 &0.7343&4.3831 \\
 \hline
 (2,11)& 23,89&0 & 0.8905&1.6947 \\
 \hline
 (4,11)& 3,23,89,683&1 & 0.8876&2.4937 \\
 \hline
 (4,12)& 3,5,7,13,17,241&2 & 0.4344&3.5698 \\
 \hline
 (2,14)& 3,43,127&1 & 0.9377&1.8614 \\
 \hline
 (2,15)& 7,31,151& 0 & 0.6365&1.582 \\
 \hline
 (4,15)& 3,7,11,31,151,331&2 & 0.7343&2.4828 \\
 \hline
(2,18)& 3,5,17,257&1 & 0.4745&1.9316 \\
 \hline


(64,5)&3,7,11,31,151,331 & 1 & 0.4486&31.4651   \\
	\hline

\caption{  Pairs $(q,n)$ satisfying \eqref{****}, i.e., $q> R^{(2/(n-2))}$}
\label{t1}
\end{longtable}

Combining the results of  Theorems \ref{r}, \ref{n6},  \ref{n5} and   \ref{result1}, we obtain our main result Theorem \ref{mr}.

For the exceptions listed in Theorem \ref{mr}, we have computationally verified the result using GAP 4r8\cite{gap}. 
 Accordingly, we have established  Corollary \ref{cor1}.\\

Following Corollary \ref{cor1},  we have done some further computer verification for pairs $(q,n), n=3,4, q^n<2^{26} =6.7108\ldots \times10^7$. The longest time to verify a pair $(q,n)$  was about 22 minutes (for the pair $(401,3))$. Accordingly we end with a conjecture that  will be the focus of a subsequent study.

\begin{conjecture}\label{conj}
Let $q=p^k$ for some positive integer $k ,$ and  prime $p$. Also suppose that $n\geq3$ is a positive integer. Then with the exception  of the pairs $(q,n)=(3,3),(4,3),(5,3)$, for every $a\in \mathbb{F}_q,$ $\mathbb{F}_{q^n}$ contains a primitive element $\alpha$ such that $\alpha+\alpha^{-1}$ is also primitive and $Tr_{\mathbb{F}_{q^n}|\mathbb{F}_q}(\alpha)=a$.
(The excluded  pairs $(3,3),(4,3),(5,3)$ are true exceptions.)
\end{conjecture}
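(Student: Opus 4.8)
The plan is to extend the analytic--sieve framework of Theorems \ref{r}--\ref{result1} downward from $n\ge 5$ to the two remaining cases $n=3$ and $n=4$, since for $n\ge 5$ the assertion is precisely Corollary \ref{cor1}. The characteristic-function decomposition \eqref{1}--\eqref{2}, the character-sum bound of Lemma \ref{p} (yielding $N_a(l_1,l_2)>0$ once $q^{n/2-1}>C_qW(l_1)W(l_2)$), the perturbation estimate of Lemma \ref{z}, and the sieve of Lemma \ref{bb}/Theorem \ref{aa} (yielding $(q,n)\in\mathfrak{P}$ once $q^{n/2-1}>C_qW(l)^2\Delta$) all carry over verbatim, since none of them actually used $n\ge 5$. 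Thus the whole task reduces to verifying \eqref{****} for $n\in\{3,4\}$ and every prime power $q$ outside a finite, explicitly checkable list, together with confirming the three listed exceptions.

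First I would run, separately for $n=3$ and $n=4$, the cascading argument of Theorem \ref{n6}: stratify by the size of $\omega=\omega(q^n-1)$, bound $W(q^n-1)$ in the tail $\omega\ge 149$ via Lemma \ref{g} (or Lemma \ref{1/8}), and in each finite window choose $l$ to absorb the smallest prime factors while sieving with the remainder. Because the primorial lower bound on $q^n-1$ forces $q$ large whenever $\omega$ is large, every window produces a threshold $q>R^{2/(n-2)}$ beyond which \eqref{****} holds. The decisive quantity is the exponent $2/(n-2)$: it equals $\tfrac12$ at $n=6$ (where the paper reaches $q>334$), but $1$ at $n=4$ and $2$ at $n=3$, so the same $R$ inflates to a threshold of order $R$ and $R^2$ respectively. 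Numerically this leaves residual ranges of roughly $q\lesssim 10^{6}$ for $n=4$ and $q\lesssim 10^{10}$--$10^{12}$ for $n=3$.

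To shrink these ranges I would exploit the cyclotomic structure, exactly as Theorem \ref{n5} exploits that the primes dividing $\tfrac{q^5-1}{q-1}$ (other than $5$) lie in the set $S$ of primes $\equiv 1\pmod{10}$. Writing $q^4-1=(q-1)(q+1)(q^2+1)$, every odd prime dividing $q^2+1$ is $\equiv1\pmod4$; and for $n=3$, every prime other than $3$ dividing $q^2+q+1$ is $\equiv1\pmod3$. Restricting the sieving primes to these progressions keeps $\sum 1/p_i$ small, hence $\delta=1-2\sum 1/p_i$ close to $1$ and $\Delta$ small, which lowers $R$ and therefore the threshold $R^{2/(n-2)}$. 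As in Theorem \ref{n5}, I would split off the factor $q_1$ (primes dividing $q-1$, where $\omega$ stays small since $q_1\le q-1$) from the cyclotomic part $q_2$ and arrange the sieve so that large $\omega(q_2)$ again forces $q$ large.

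The hard part is unambiguously $n=3$. Even after the cyclotomic refinement the exponent $2/(n-2)=2$ squares every threshold, so the analytic method can dispose only of $q$ above a bound of order $10^{10}$ or more, whereas the direct GAP verification reported in the paper reaches only $q^3<2^{26}$, i.e.\ $q<406$. Closing this gap is exactly what keeps the statement a conjecture: one must either sharpen the sieve constants enough---perhaps through additional character-sum savings specific to the degree-$3$ weight $F(x)=x^{\,n_1+q^3-1-n_2}(x^2+1)^{n_2}$, or through further arithmetic restrictions on the admissible primes---to bring the threshold within computational reach, or devise a targeted search over the enormous intermediate range; for $n=4$ the gap is far milder and a complete computation up to $\sim10^6$ looks plausible. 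Finally, the three true exceptions $(3,3),(4,3),(5,3)$ are settled by direct enumeration in $\mathbb{F}_{27},\mathbb{F}_{64},\mathbb{F}_{125}$: one lists all primitive $\alpha$, records the values $\bigl(\mathrm{Tr}(\alpha),\,\alpha+\alpha^{-1}\bigr)$, and exhibits a trace value $a$ for which no primitive $\alpha$ yields a primitive $\alpha+\alpha^{-1}$, confirming that these pairs are genuinely excluded rather than artefacts of the estimates.
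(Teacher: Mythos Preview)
The statement you are addressing is a \emph{conjecture}, not a theorem: the paper explicitly offers no proof and closes by saying it ``will be the focus of a subsequent study.'' There is therefore no paper proof to compare against, and your write-up is not a proof either---as you yourself acknowledge when you write that closing the $n=3$ gap ``is exactly what keeps the statement a conjecture.''

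Your outline is a sensible research plan and mirrors exactly what the paper implies would be needed: carry the machinery of Lemma \ref{p}, Lemma \ref{z}, Lemma \ref{bb} and Theorem \ref{aa} down to $n=3,4$, exploit cyclotomic restrictions on the sieving primes as in Theorem \ref{n5}, and finish by computation. But the genuine gap you identify is real and unbridged: for $n=3$ the sieve condition \eqref{****} reads $q^{1/2}>C_qW(l)^2\Delta$, so the threshold scales like $R^{2}$, leaving an analytic bound several orders of magnitude above the computational range $q^3<2^{26}$ that the paper reports having checked. Nothing in your proposal---neither the arithmetic-progression refinement on primes dividing $q^2+q+1$ nor vague ``additional character-sum savings''---is shown to close this gap, and indeed no such argument is known. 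So what you have written is an accurate diagnosis of the obstruction together with a verification strategy for the three listed exceptions, but it is not, and does not claim to be, a proof of the conjecture.
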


\textbf{Acknowledgment}: This work has been supported by CSIR, New Delhi, Govt. of India, Under
	Grant No. F.No. 09/086(1145)/2012-EMR-1.


\end{document}